\documentclass[12pt]{amsart}
\usepackage{amsthm,amsmath,amssymb,latexsym,graphics}
\usepackage{hyperref}
\parskip=6pt
\lineskip=18pt
\setlength{\oddsidemargin}{0pt}
\setlength{\evensidemargin}{0pt}
\setlength{\topmargin}{-12pt}
\setlength{\textwidth}{450pt}
\setlength{\textheight}{620pt}
\setlength{\footskip}{40pt}

\newtheorem{proposition}{Proposition}
\newtheorem{theorem}{Theorem}
\newtheorem{remark}{Remark}

\newtheorem*{claim}{Claim}
\newtheorem{cor}{Corollary}
\newcommand{\grad}{\nabla}

\newcommand{\vo}{d\/vol_{g_c}}
\newcommand{\Ks}{K^{[s]}}
\newcommand{\Fs}{F^{[s]}}
\newcommand{\As}{A^{[s]}}
\newcommand{\Ls}{\Lambda^{[s]}}
\newcommand{\ind}{\text{ind}}
\newcommand{\dxi}{\dot{\xi}}
\newcommand{\dv}{\dot{v}}
\newcommand{\mint}{\diagup\!\!\!\!\!\!\!\int_{\mathbb S^{4}}}
\newcommand{\ep}{\epsilon}

\begin{document}
\title{On the prescribing $\sigma_2$ curvature equation on $\mathbb S^4$}
\author{S.-Y. Alice  Chang}
\address{Department of Mathematics\\  Princeton University\\  Princeton, NJ 08540}
\email{chang@math.princeton.edu}
\author{Zheng-Chao Han }
\address{Department of Mathematics\\ Rutgers University\\ 110 Frelinghuysen Road\\
Piscataway, NJ 08854}
\email{zchan@math.rutgers.edu} 
\author{Paul Yang}
\address{Department of Mathematics\\  Princeton University\\  Princeton, NJ 08540}
\email{yang@math.princeton.edu}
\thanks{The research of the first and third author is 
partially supported  by  NSF through grant DMS-0758601;
the first author also gratefully acknowledges partial support
from the Minerva Research Foundation and The Charles Simonyi Endowment fund 
during the academic year 08-09 while visiting Institute of Advanced Study, Princeton.
The research of the second author is partially supported  by  NSF through grant DMS-0103888 and by
a Rutgers University Research Council Grant(202132).}
\date{}
\begin{abstract}
Prescribing $\sigma_k$ curvature equations are fully nonlinear generalizations of the 
prescribing Gaussian or scalar curvature equations. Given a positive function $K$ to be prescribed on the
$4$-dimensional round sphere. We obtain asymptotic profile analysis  for potentially blowing up 
solutions to the $\sigma_2$ curvature equation with the given $K$; and rule out the possibility of
 blowing up solutions  when $K$ satisfies a non-degeneracy condition. We also prove  uniform
a priori estimates for solutions to a family of  $\sigma_2$ curvature equations deforming $K$ to 
a positive constant under the same non-degeneracy condition on $K$, and prove the existence of a solution
using degree argument to this deformation involving fully nonlinear elliptic operators under an additional,
natural degree condition on a finite dimensional map associated with $K$.
\end{abstract}

\maketitle

\section{ Description of main results}
\vskip10pt

Our main results in this paper are  (potential) blow up profile analysis,  a priori estimates and existence of
 admissible solutions $w$ to 
the $\sigma_2$ curvature equation
\begin{equation} \label{1}
\sigma_2(g^{-1} \circ A_g) = K(x),
\end{equation}
on $\mathbb S^4$, 
where $g = e^{2w(x)}g_c$ is a metric conformal to $g_c$, 
with $g_c$ being the canonical background metric on the round sphere  $\mathbb S^4$,
$ A_g $ is the the Weyl-Schouten tensor of the  metric $g$, 
\begin{equation} \label{trans}
\begin{split}
A_g &=  \frac {1}{n-2} \{ Ric - \frac {R}{2(n-1)}g \} \\
& = A_{g_c} - \left[ \grad^2w-dw \otimes dw + \frac 12 |\grad w|^2 g_c\right],
\end{split}
\end{equation}
and $\sigma_k(\Lambda)$, for any $1-1$ tensor $\Lambda$ on an $n-$dimensional  vector space
and $k \in \mathbb N$, $0 \leq k \leq n$, is 
the $k$-th elementary symmetric function of the eigenvalues of $\Lambda$;
$K(x)$ is a given function on $\mathbb S^4$ with  some appropriate  assumptions, and
an \emph{admissible} solution is defined to be a $C^2(M)$ solution $w$ to
\eqref{1} such that for all $x \in \mathbb S^4$,
$A_g (x) \in \Gamma^+_k$, namely, $\sigma_j(g^{-1} \circ A_g ) >0$ for $1 \leq j \leq k$.
 Note that
 $\sigma_1(A_g)$ is simply a positive constant multiple of the scalar curvature of $g$, so
 $A_g$ in the $\Gamma^{+}_k$ class is a generalization of the notion that  the scalar
curvature $R_g$ of $g$ having a fixed $+$ sign. 

Note that, since
\[
\sigma_2(g^{-1} \circ A_g) = e^{-4w} \sigma_2(g_c^{-1} \circ A_g),
\]
 so  \eqref{1} is equivalent to
\begin{equation} \label{2}
 \sigma_2(g_c^{-1} \circ \left[A_{g_c} -  \grad^2w+dw \otimes dw - 
\frac 12 |\grad w|^2 g_c   \right]) \\
=  K(x) e^{4 w(x)}.
\end{equation}

It is well known that \eqref{2} is  \emph{elliptic} at an admissible solution;
and in fact, any solution $w$ to \eqref{2} on $\mathbb S^4$ is admissible.
There have been a large number of papers on problems related to the $\sigma_k$ curvature since
the  work \cite{V99} of Viaclovsky a decade ago. It is inadequate to do even
 a short survey of recent work in the introductory remarks here. We will instead refer the reader to
 recent surveys \cite{V06} by  Viaclovsky and \cite{CC07} by Chang and Chen.

As alluded to above, a similar problem to \eqref{2} for the $\sigma_1$ curvature was a predecessor to
\eqref{2}. More specifically, 
 if we  prescribe a function $K(x)$ on the round $n$-dimensional sphere $(\mathbb S^n,  g_c)$ to be the
scalar curvature of a metric $g=e^{2w}g_c$ pointwise conformal to $g_c$, then $w$ satisfies
\begin{equation}\label{sc}
2(n-1)\Delta_{g_c} w +(n-1)(n-2)|\grad w|^2=R_{g_c}-K(x) e^{2w}.
\end{equation}
Similar equation can be formulated for a general manifold. 
One difference between \eqref{sc} and \eqref{2} is that
\eqref{sc} is semilinear in $w$, while \eqref{2} is fully nonlinear in $w$.
\eqref{sc} takes on the familiar form 
\begin{equation}\label{gc}
2\Delta_{g_c} w =R_{g_c}-K(x)e^{2w},
\end{equation}
when $n=2$,  which is the Nirenberg problem. The $n\ge 3$ case of \eqref{sc} is often written in terms
of a different variable $u=e^{(n-2)w/2}$, which would render the equation in the familiar form
\begin{equation}\label{ya}
-4\frac{n-1}{n-2} \Delta_{g_c} u + R_{g_c} u = K(x)u^{\frac{n+2}{n-2}}.
\end{equation}
The $K(x)\equiv \text{const.}$ case of \eqref{ya} on a general compact manifold
is the famous Yamabe problem. \eqref{gc} and \eqref{ya}
have attracted enormous attention in the last several decades. A large collection of phenomena on the
possible behavior of solutions to these equations, and methods and techniques of attacking these problems have
been accumulated, which have tremendously enriched our understanding in solving a large class of nonlinear
(elliptic) PDEs, and provided guidance in attacking seemingly unrelated problems. 
It is impossible in the space here to provide even a partial list of references. Please see 
\cite{K85}, \cite{LP}, \cite{S91}, \cite{L98}, \cite{CY02}, \cite{KMS09}, and the references therein
 to get a glimpse of the results and techniques in this area.

Directly related to our current work are some work on the (potential) blow up analysis, a priori estimates, 
and existence of solutions to \eqref{gc} or \eqref{ya}. It is proved in \cite{Han89} and \cite{CGY93} that
when $K$ is a positive function on $\mathbb S^2$, a sequence of blowing up solutions to \eqref{gc} 
has only one point blow up and has a well-defined blow up profile, and that when
$K$ is a positive $C^2$ function on $\mathbb S^2$ such that $\Delta_{g_c} K(x)\ne 0$ at any of its
critical points, no blow up can happen, more precisely, there is an a priori bound on the set of solutions to
\eqref{gc} which depends on the $C^2$ norm of $K$, the positive lower bound of $K$ and $|\Delta_{g_c} K(x)|$ 
near the critical points of $K$, and the modulus of continuity of the second derivatives of $K$. Similar
results for \eqref{ya} in the case $n=3$ were proved in \cite{Z90} and \cite{CGY93}, and for
\eqref{ya} in the case $n\ge 4$ in \cite{YL95} under a flatness condition of $K$ at its critical points.
When $K$ is a Morse function, say, this flatness condition fails when $n\ge 4$. In fact it is proved in
\cite{YL96} that in such cases on $\mathbb S^4$, there can be a sequence of  solutions to \eqref{ya}
blowing up at more than one points.  Later on \cite{CL99} constructed solutions blowing up on $\mathbb S^n$,
$n\ge 7$, 
with unbounded layers of ``energy concentration" for certain non-degenerate $K$. A natural question 
concerning the $\sigma_k$ curvature equations such as \eqref{2} is:
 which kind of behavior does its solution exhibit?

In the following we will often transform \eqref{2} through a conformal automorphism $\varphi$ of $\mathbb S^4$
as follows. Let $|d \varphi (P)|$ denote the factor such that $|d \varphi (P)[X]|= |d \varphi (P)| |X|$ for
any tangent vector $X\in T_P(\mathbb S^4)$, and 
\begin{equation}\label{wtransf}
w_{\varphi}(P)= w\circ \varphi (P) + \ln |d \varphi (P)|.
\end{equation}
Then $w$ is a solution to \eqref{2} iff  $w_{\varphi}$ is a solution to
\begin{equation}\label{transf}
 \sigma_2(g_c^{-1} \circ \left[A_{g_c} -  \grad^2w_{\varphi} +dw_{\varphi}  \otimes dw_{\varphi}  - 
\frac 12 |\grad w_{\varphi} |^2 g_c   \right]) \\
=  K\circ \varphi (x) e^{4 w_{\varphi} (x)}.
\end{equation}

Our first results show that  solutions to
\eqref{2} on $\mathbb S^4$ exhibits similar behavior as those to \eqref{gc} on $\mathbb S^2$ or
\eqref{ya} on $\mathbb S^3$.

\begin{theorem} \label{chy}
Consider a family of admissible conformal metrics $g_j=e^{2w_j}g_c$ on $\mathbb S^4$ with
$\sigma_2(g^{-1}_j\circ A_{g_j}) = K(x)$, where $g_c$ denotes
the canonical round metric on $\mathbb S^4$ and $K(x)$ denotes a  $C^2$ positive
 function on $\mathbb S^4$.
Then there exists at most one isolated simple blow up point in the sense that, 
if $\max w_j = w_j (P_j) \to \infty$, then there exists conformal automorphism $\varphi_j$ of $\mathbb S^4$
such that, if we define $v_j (P) = w_j \circ \varphi_j (P) + \ln |  d \varphi_j(P)|$, we have
\begin{equation}\label{ptws}
v_j (P) - \frac {1}{4} \ln \frac {6}{K(P_j)} \to 0 \quad \text{in} \quad L^{\infty}(\mathbb S^4),
\end{equation}
and
\begin{equation}\label{grad4}
\int_{\mathbb S^4} |\grad v_j|^4 \to 0.
\end{equation}
In fact, we have the stronger conclusion that the $W^{2,6}$ norm of $v_j$ stays
bounded and  $v_j -  \frac {1}{4} \ln \frac {6}{K(P_j)} \to 0$ in 
$C^{1, \alpha}(\mathbb S^4)$ for any $0< \alpha < 1/3$.
\end{theorem}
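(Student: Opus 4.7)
The plan combines three ingredients, in the spirit of the Nirenberg-problem analysis of \cite{Han89,CGY93}: the conformal invariance of $\int_{\mathbb S^4}\sigma_2(g^{-1}\circ A_g)\,d\text{vol}_g$, the non-compactness of the conformal group $SO^+(5,1)$ of $\mathbb S^4$ used to normalize the blowing-up sequence, and Viaclovsky's classification of admissible constant-$\sigma_2$ metrics; the added care beyond the semilinear case lies in propagating admissibility along the blow-up. Since every conformal metric on $\mathbb S^4$ is locally conformally flat, the Chern-Gauss-Bonnet formula yields a positive universal constant $c_0$ with $\int_{\mathbb S^4}\sigma_2(g_j^{-1}\circ A_{g_j})\,d\text{vol}_{g_j}=c_0$; combined with \eqref{1} this gives
$$\int_{\mathbb S^4}K\,e^{4w_j}\,d\text{vol}_{g_c}=c_0,$$
so $\{e^{4w_j}\}$, and after any conformal pullback $\{e^{4v_j}\}$, is uniformly $L^1$-bounded.

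Next, choose $\varphi_j$ by composing a rotation sending $P_j$ to a fixed reference point $P_0$ with a one-parameter dilation at $P_0$, whose scale $\lambda_j$ is chosen to impose the center-of-mass normalization $\int_{\mathbb S^4}x\,e^{4v_j}\,d\text{vol}_{g_c}=0$ in $\mathbb R^5\supset\mathbb S^4$; existence of such $(\lambda_j,\text{rotation})$ is a standard topological/degree argument exploiting the non-compactness of the conformal group. This normalization cancels the concentration of $w_j$ at $P_j$ against the term $\ln|d\varphi_j|$, so that $v_j$ is bounded above, and together with the $L^1$-bound above and a Moser-Trudinger/Beckner type inequality in dimension 4 we obtain $\|v_j\|_{L^\infty}\le C$.

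Pass to a subsequence so that $P_j\to P_\infty$ and $\varphi_j$ concentrates at $P_\infty$; then $K\circ\varphi_j\to K(P_\infty)$ in every $L^p$, $p<\infty$. The PDE \eqref{transf} with the uniform $L^\infty$-bound delivers $W^{2,2}$-compactness, and any $C^0$-limit $v_\infty$ solves the constant-$\sigma_2$ equation $\sigma_2(g_{v_\infty}^{-1}\circ A_{g_{v_\infty}})=K(P_\infty)$. By Viaclovsky's classification \cite{V99}, admissible solutions on $\mathbb S^4$ come from round metrics, and the center-of-mass normalization pins the corresponding conformal automorphism to the identity, forcing $v_\infty\equiv\tfrac{1}{4}\ln\tfrac{6}{K(P_\infty)}$ and establishing \eqref{ptws} together with \eqref{grad4}. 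Once $v_j$ is $C^0$-close to this constant, the eigenvalues of $g_c^{-1}\circ A_{g_{v_j}}$ lie in a compact subset of the interior of $\Gamma^+_2$, so the linearization of the $\sigma_2$ operator is uniformly elliptic along the sequence; standard linear elliptic estimates then furnish uniform $W^{2,6}$-bounds, and the Sobolev embedding $W^{2,6}(\mathbb S^4)\hookrightarrow C^{1,1/3}(\mathbb S^4)$ gives $C^{1,\alpha}$ convergence for every $\alpha<1/3$.

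The principal obstacle will be ruling out the survival of secondary bubbles in $v_j$ after the pullback: one needs a local blow-up argument showing that each concentration point contributes a definite fraction of the conformally invariant mass $c_0$, so that the mass identity in the first step permits at most one such concentration. Propagating admissibility --- keeping the eigenvalues of $g_c^{-1}\circ A_{g_{v_j}}$ in the open cone $\Gamma^+_2$ throughout this local blow-up, so that the linearized operator stays uniformly elliptic --- is the extra technical difficulty absent from the semilinear $\sigma_1$ case of \cite{Han89,CGY93}.
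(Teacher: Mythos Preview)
Your outline has two genuine gaps that would prevent it from going through as written.

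\textbf{The $L^\infty$ bound.} You assert that the center-of-mass normalization together with the $L^1$ bound and ``a Moser--Trudinger/Beckner type inequality'' yields $\|v_j\|_{L^\infty}\le C$. Moser--Trudinger runs in the opposite direction: it controls exponential integrability in terms of Sobolev norms, not the other way around. The center-of-mass condition does not by itself prevent $v_j$ from forming a bubble; it only forces any concentration to be balanced. The paper normalizes differently (fixing $v_j(P_j)=\tfrac14\ln\tfrac{6}{K(P_j)}$ by choice of dilation), which gives an \emph{explicit} upper bound away from the antipode $-P_j$ via \eqref{upper}. One then invokes the local gradient/Hessian estimates of Guan--Wang to extract $C^{2,\alpha}$ convergence on $\mathbb S^4\setminus\{-P_*\}$, identifies the limit via a Liouville theorem, and uses a Gauss--Bonnet mass count together with the local pointwise estimate of \cite{Han04} to close the upper bound across $-P_*$. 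The lower bound comes separately from the admissibility information $R_g\ge 0$, rewritten as $2-\Delta v_j-|\nabla v_j|^2\ge 0$ and fed into the Green's representation. None of this is supplied by your Moser--Trudinger step.

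\textbf{The $W^{2,6}$ estimate.} Your closing argument is circular: you infer from $C^0$-closeness to a constant that the eigenvalues of $g_c^{-1}\circ A_{g_{v_j}}$ stay in a compact subset of $\Gamma_2^+$, hence uniform ellipticity, hence $W^{2,6}$ bounds by ``standard linear elliptic estimates''. But $A_{g_{v_j}}$ contains $\nabla^2 v_j$, so $C^0$ information on $v_j$ says nothing about where those eigenvalues sit; the ellipticity constants of the linearized operator depend on exactly the $C^2$ bounds you are trying to prove. The paper obtains $W^{2,3}$ and then $W^{2,6}$ bounds by an entirely different route: integral (Bochner-type) identities for $S_{ij}\nabla^2_{ij}R$ and $S_{ij}\nabla^2_{ij}|\nabla v|^2$ adapted from \cite{CGY1}, combined with the divergence-free property of the Newton tensor and the smallness of $\int|\nabla v_j|^4$ established in \eqref{grad4}. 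That smallness is what allows the nonlinear terms to be absorbed; it is the substitute for the uniform ellipticity you are assuming.
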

We also have

\begin{theorem}\label{chy2}
Let $K(x)$ be a $C^2$ positive function on $\mathbb S^4$  satisfying a non-degeneracy condition
\begin{equation}\label{nd}
\Delta K(P) \neq 0 \quad \text{whenever}\quad \grad K(P) =0,
\end{equation}
and we consider solutions $w(x)$ to \eqref{2}, with $K(x)$ replaced by 
$$\Ks (x) := (1-s)6+s K(x),$$
 for $0<s \le 1$, namely,
\begin{equation}\tag{\ref{2}$'$}\label{2'}
\sigma_2(g_c^{-1} \circ \left[A_{g_c} -  \grad^2w+dw \otimes dw - 
\frac 12 |\grad w|^2 g_c   \right]) \\
=  \Ks (x) e^{4 w(x)}.
\end{equation}
Then there exist a priori $C^{2, \alpha}$ estimates on $w$, uniform in $0<s \le 1$, which depend on  
the $C^2$ norm of
$K$, the modulus of continuity of $\grad^2 K$,  positive lower bound of  $\min K$
and positive lower bound of $|\Delta K(x)|$ in a neighborhood
of the critical points of $K$.
\end{theorem}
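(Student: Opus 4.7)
The plan is to argue by contradiction, combining the single-point blow-up profile of Theorem \ref{chy} with a Kazdan--Warner integral obstruction for the $\sigma_2$ equation and a second-order refinement that detects $\Delta K$.

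Suppose a uniform $C^{2,\alpha}$ bound fails. Because $\sigma_2^{1/2}$ is concave on $\Gamma_2^+$, Evans--Krylov theory for concave fully nonlinear elliptic equations on admissible metrics bootstraps $C^{2,\alpha}$ estimates from the $W^{2,6}$ bound of Theorem \ref{chy}; thus failure of the $C^{2,\alpha}$ estimate forces failure of the $L^{\infty}$ bound. Hence there exist $s_j\in(0,1]$ and admissible solutions $w_j$ of \eqref{2'} at $s=s_j$ with $\max w_j = w_j(P_j)\to\infty$. Passing to subsequences, $P_j\to P_\infty$ and $s_j\to s_\infty\in[0,1]$. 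Apply Theorem \ref{chy} to obtain conformal automorphisms $\varphi_j$ of $\mathbb{S}^4$ such that $v_j:=w_j\circ\varphi_j+\ln|d\varphi_j|$ is uniformly bounded in $W^{2,6}$ and $v_j\to\tfrac{1}{4}\ln\bigl(6/K(P_\infty)\bigr)$ in $C^{1,\alpha}$. Because $v_j$ stays bounded while $\max w_j\to\infty$, the automorphisms $\varphi_j$ concentrate, and the conformal volume form $e^{4w_j}\,\vo$, which is the pushforward of $e^{4v_j}\,\vo$ under $\varphi_j$, concentrates at $P_\infty$.

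Next, invoke the Kazdan--Warner identity for the $\sigma_2$ equation on $\mathbb{S}^4$: for each of the five coordinate functions $x_i$ on $\mathbb{S}^4\subset\mathbb{R}^5$, whose tangential gradient generates a conformal Killing field,
\begin{equation*}
\int_{\mathbb{S}^4}\langle\grad \Ks,\grad x_i\rangle\, e^{4w_j}\,\vo = 0.
\end{equation*}
Since $\grad \Ks = s_j\grad K$ with $s_j>0$, we may cancel $s_j$ and pass to the limit using the concentration of $e^{4w_j}\,\vo$ at $P_\infty$: the resulting identity forces $\langle\grad K(P_\infty),\grad x_i(P_\infty)\rangle = 0$ for $i=1,\dots,5$, hence $\grad K(P_\infty)=0$, so $P_\infty$ is a critical point of $K$.

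To reach the contradiction we refine the identity to one that isolates $\Delta K(P_\infty)$. Change variables via $\varphi_j$ to the balanced side, expand $K\circ\varphi_j$ to second order about $P_\infty$ using $\grad K(P_\infty)=0$, and extract the leading surviving contribution at the scale set by the concentration of $\varphi_j$. The $W^{2,6}$ and $C^{1,\alpha}$ control on $v_j$ provided by Theorem \ref{chy} is enough to justify passage to the limit in every piece, including the quadratic Taylor remainder. Summing over $i$ (equivalently, averaging the obstruction over the $\mathbb{R}^5$-orbit of conformal Killing fields at $P_\infty$) isolates $\Delta K(P_\infty)$ multiplied by a strictly positive geometric constant; after once more dividing by $s_j$ to cancel the linear dependence of $\Ks$ on $s_j$, the limit identity reads $\Delta K(P_\infty)=0$, contradicting \eqref{nd}.

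The principal difficulty lies in the last step: pinpointing the correctly rescaled Kazdan--Warner identity whose limit isolates $\Delta K(P_\infty)$, controlling the quadratic Taylor remainder under only the modest $W^{2,6}\cap C^{1,\alpha}$ control available on $v_j$, and verifying that the emerging geometric constant is nonzero. The trick of dividing by $s_j$ \emph{before} taking limits is what keeps the contradiction uniform as $s\to 0^+$, exploiting that $\Ks$ and $K$ share the same critical set and that $\Delta\Ks$ and $\Delta K$ have identical sign at every critical point for every $s>0$.
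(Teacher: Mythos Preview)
Your overall strategy matches the paper's: argue by contradiction, invoke the single-point blow-up profile, apply the Kazdan--Warner identity with the factor $s_j>0$ divided out so that the resulting estimates are uniform in $s$. Two details of your second-order step need correction, however. First, $\Delta K$ is not isolated by ``summing over $i$''. In stereographic coordinates with $P_j$ at the north pole, the four \emph{tangential} identities (for $x_1,\dots,x_4$) yield the gradient condition, while $\Delta K(P_j)$ is extracted from the single \emph{radial} identity (for $x_5$), after rescaling by $t_j^2$: the quadratic term $-2\sum_i b_i\, x_i^2 x_5$ in $\langle\grad K,\grad x_5\rangle$ integrates against $e^{4w_j}$ to a nonzero multiple of $t_j^{-2}\sum_i b_i = t_j^{-2}\Delta K(P_j)$, and this is the leading surviving term.

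Second, the tangential step must be made quantitative. You need $|\grad K(P_j)|=o(t_j^{-1})$, not merely $\grad K(P_\infty)=0$: in the radial identity the first-order Taylor contribution $-\sum_i a_i\, x_i x_5$ integrates to $a_i\cdot O(t_j^{-1})$, and only $a_i=o(t_j^{-1})$ makes this $o(t_j^{-2})$, small enough to survive the $t_j^2$ rescaling needed to read off $\Delta K(P_j)\to 0$. The paper obtains this rate by expanding $K$ about $P_j$ (not $P_\infty$) and computing each integral explicitly in the $v_j$ variables; in fact only the $L^\infty$ bound and pointwise convergence \eqref{ptws'} are used, not the full $W^{2,6}$ or $C^{1,\alpha}$ control of Theorem~\ref{chy}.
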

\begin{remark}
\emph{\textbf{Theorems 1}} and \emph{\textbf{2}}, with the uniform estimates in \emph{\textbf{Theorem 2}} for solutions
to \eqref{2'} only for $0<s_0\le s \le 1$ and the estimates possibly depending on
$0<s_0<1$, were obtained several years ago and were announced in
\cite{Han04}. The details were  written up in \cite{CHY1}
 and presented by the second author on several occasions, including at the
2006 Banff workshop ``Geometric and Nonlinear Analysis". 
The current work can be considered as a completion of \cite{CHY1}.
As mentioned above similar statements for \eqref{gc} and \eqref{ya} 
were obtained earlier in \cite{Han89}, \cite{CGY93}, \cite{YL95}, \cite{Z90}, among others. 
When applying these estimates to the corresponding equation such as \eqref{gc} and \eqref{ya}
with $K(x)$ replaced by $\Ks (x) $, 
all previous work stated and proved that the a priori  estimates on the solutions remain uniform as long as
$0<s_0\le s \le 1$, for any fixed $s_0$. This stems from the dependence of the
 a priori  estimates  on
  a positive lower bound of $|\Delta K(x)|$ near the critical points of $K$, among other things. 
Since $\Delta \Ks = s \Delta K(x)$ becomes small when $s>0$ is small, previous work in this area
assumed that the a priori  estimates could deteriorate as $s>0$ becomes small. In these previous work,
one has to devise a way to study the problem when  $s>0$ becomes small. \cite{CGY93} and \cite{YL95}
used some kind of ``center of mass" analysis 
via conformal transformations of the round sphere. Technically \cite{CGY93} and \cite{YL95} used a 
constrained variational problem to study the  ``centered problem". In essence the success of these
methods was due to the semilinear nature of the relevant equations, so one could still have control on the
``centered solution" in some norm weaker than $C^{2,\alpha}$ norm, say, $W^{2,p}$ norm, when $s>0$ is small,
and used these estimates to prove existence of solutions under natural geometric/topological
assumptions on $K$.
This approach was problematic for our \underline{fully nonlinear} equation \eqref{2}. Due to this difficulty,
until recently we have not been successful in   using our preliminary version of 
 \emph{\textbf{Theorem 2}} (for estimates in the range $0<s_0\le s \le s$ which may depend on $s_0$)
and the deformation $\Ks$ above to the equation to establish solutions to \eqref{2}, under natural 
geometric/topological 
assumptions on $K$. It was our recent realization that in our setting, as well as in those of 
\cite{CGY93} and  \cite{YL95},
the  a priori  estimates of solutions, under conditions like those in   \emph{\textbf{Theorem 2}},
remain uniform for all $1\ge s>0$\/! After we completed this work and were compiling the bibliography for this
paper, we noticed that M. Ji made  a similar observation in her work on \eqref{gc} in \cite{J04}.
This uniform a prioir estimates for   all $1\ge s>0$   leads to our next Theorem.
\end{remark}

\begin{theorem}\label{exist}
Suppose $K(x)$ is a  $C^2$ positive
 function on $\mathbb S^4$ satisfying \eqref{nd}. Then the map
\[
G(P,t) = |\mathbb S^4|^{-1} \int_{\mathbb S^4} K \circ \varphi_{P,t}(x) x \, \vo \in \mathbb R^5
\]
does not have a zero for $(P,t)\in \mathbb S^4\times [t_1, \infty)$, for $t_1$ large. 

Furthermore,  consider $G$ as a map defined on $(t-1)P/t\in B_r(O)$ for $r> (t-1)/t$ and if
\begin{equation} \label{de}
\deg \left(G, B_r(O), O\right) \neq 0, \quad \text{for $r\ge r_1=(t_1-1)/t_1$,}
\end{equation}
then \eqref{2} has a solution.

In particular, if $K$ has only isolated 
critical points in the region $\{x \in \mathbb S^4: \Delta K(x) <0\}$ and 
\[
\sum_{x \in \mathbb S^4: \Delta K(x) <0, \grad K(x) =0} \ind(\grad K(x)) \neq 1,
\]
where $\ind(\grad K(x))$ stands for the index of the vector field $\grad K(x)$ at its
isolated zero $x$, then \eqref{de} holds, therefore,
\eqref{2} has a solution.
\end{theorem}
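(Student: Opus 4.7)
The overall plan is a Leray--Schauder degree argument along the homotopy $K \rightsquigarrow K^{[s]} = (1-s)\cdot 6 + sK$ from \textbf{Theorem 2}. The critical input is that the a priori estimates of \textbf{Theorem 2} hold uniformly on the \emph{entire} range $s \in (0,1]$, which is what makes the Leray--Schauder degree of the solution map of \eqref{2'} well defined and homotopy-invariant all the way down to $s\to 0^+$. At $s = 0$ the round-sphere equation has its solutions given by the $5$-parameter family $\ln|d\varphi_{P,t}|$ of M\"obius conformal factors, coordinatized by $q = (t-1)P/t \in B_1(O) \subset \mathbb R^5$. The main obstacle is to identify the Leray--Schauder degree at $s = 1$ with the Brouwer degree of the finite-dimensional map $G$.

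Before the degree argument can even be started, one must verify that $G(P,t) \neq 0$ for large $t$, so that $\deg(G, B_r(O), O)$ is defined for $r \ge r_1$. As $t \to \infty$ the pullback measure $(\varphi_{P,t})^*(\vo)$ concentrates at a single point of $\mathbb S^4$, and a Taylor expansion of $K\circ \varphi_{P,t}$ combined with explicit moments of this concentrating family yields an asymptotic expression of the form
\begin{equation*}
G(P,t) = \frac{c_1}{t}\,\grad_{\mathbb S^4} K(P) + \frac{c_2\,\Delta K(P)}{t^2}\,P + O(t^{-3}),
\end{equation*}
with explicit nonzero constants $c_1, c_2$; the first term is tangent to $\mathbb S^4$ at $P$ and the second is radial, so the non-degeneracy hypothesis \eqref{nd} forces at least one to be nonzero, giving $G(P,t) \neq 0$ for $t \geq t_1$ sufficiently large.

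For existence I would cast \eqref{2'} as a Leray--Schauder problem in $C^{2,\alpha}(\mathbb S^4)$; by \textbf{Theorem 2} the degree is well defined and homotopy-invariant for $s \in (0,1]$. I would then analyze the limit $s \to 0^+$: the uniform estimates force solutions $w_s$ of \eqref{2'} to converge (along subsequences) to some $\ln|d\varphi_{P^*,t^*}|$, and the limiting parameters $(P^*, t^*)$ are selected by the Kazdan--Warner/Pohozaev-type integral identity obtained by testing \eqref{2'} against the coordinate functions $x \in \mathbb R^5$; dividing by $s$ and passing to the limit yields exactly $G(P^*,t^*) = 0$. A linearization/transversality argument then identifies the Leray--Schauder degree for small $s > 0$, and hence at $s = 1$, with $\pm\deg(G,B_r(O),O)$, which by \eqref{de} is nonzero, producing a solution of \eqref{2}. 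The last assertion follows by a Poincar\'e--Hopf computation on $\overline{B_r(O)}$: the boundary expansion above, combined with the fact that the interior behaviour of $G$ is governed by the critical set of $K$ in $\{\Delta K < 0\}$, gives $\deg(G, B_r(O), O) = 1 - \sum \ind(\grad K)$ summed over such critical points, which is nonzero by hypothesis.
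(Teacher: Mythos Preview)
Your outline matches the paper's strategy: homotopy along $K^{[s]}$, the uniform-in-$s$ estimates of Theorem~\ref{chy2}, Kazdan--Warner to single out $G$, and a finite-dimensional reduction to compute the degree. Two places where the paper is more specific than your sketch deserve mention.

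First, because \eqref{2} is \emph{fully nonlinear}, you cannot literally ``cast \eqref{2'} as a Leray--Schauder problem'' (identity plus compact) in $C^{2,\alpha}$. The paper instead invokes Y.~Y.~Li's degree theory \cite{YL89} for second-order fully nonlinear elliptic operators, which is what gives a well-defined, homotopy-invariant degree for $F^{[s]}$.

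Second, your ``linearization/transversality argument'' is where essentially all of the work lies, and the paper does not carry it out by linearizing \eqref{2'} directly --- at $s=0$ the linearization has a five-dimensional kernel coming from the M\"obius group, so the degree cannot be read off naively. Instead the paper uses the fibration $C^{2,\alpha}(\mathbb S^4)\cong \mathcal S_0\times B$ given by centering ($\int e^{4v}x=0$), applies the implicit function theorem to the \emph{projected} equation $\Pi\circ F^{[s]}[v,\xi]=0$ to produce, for each $(P,t)$ and small $s$, a unique centered $v(x;P,t,s)$ with $\|v\|_{C^{2,\alpha}}=O(s)$, and is left with five Lagrange-multiplier equations $\Lambda^{[s]}(P,t)=0$. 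Kazdan--Warner applied to \eqref{lag} shows $\Lambda^{[s]}=-5sG+O(s^2)$, so the zeros of $\Lambda^{[s]}$ track those of $G$. A separate spectral deformation of the linearization (the operators $L_{\tau,s}$) then matches indices, giving $\deg(F^{[s]},\mathcal N\times B_{r_0},O)=-\deg(\Lambda^{[s]},B_{r_0},O)=\deg(G,B_{r_0},O)$; the extra sign comes from the constant eigenfunction in $\mathcal S_0$, which contributes one negative eigenvalue to $L_{0,s}$. Your proposal is correct in spirit, but these two mechanisms --- the appropriate degree theory and the explicit Lyapunov--Schmidt via centering --- are what turn the outline into a proof.
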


A corollary of the proof for the $W^{2,p}$ estimate in 
Theorem~\ref{chy} is a bound on a functional determinant whose critical points are solutions to
\eqref{2}. We recall that the relevant functional determinant is defined, similar to \cite{CGY1}, through
\[
I\hskip -1mm I [w]  
=  \mint \left( (\Delta_0 w)^2 + 2 |\grad_0 w|^2 + 12 w \right)\,  d \/ vol_{g_c},
\]
\[
C_K[w]= 3  \log \left( \mint K e^{4w}\,  d \/ vol_{g_c}\right),
\]
and
\[
\begin{split}
Y[w] &= \frac {1}{36} \left(  \mint R^2 \,  d \/ vol_{g_c} -  \mint R^2_0 \,  d \/ vol_{g_c}\right) \\
&= \mint \left( \Delta_0 w + |\grad_0 w|^2\right)^2 \, d \/ vol_{g_c} - 4  \mint |\grad_0 w|^2 \, d  \/vol_{g_c}.
\end{split}
\]
$ F[w] = Y[w]- I\hskip -1mm I [w]+C_K[w]$ is the relevant functional determinant and a critical point of
$F[w]$ is a solution of \eqref{2}. 
It is known that, for any conformal transformation $\varphi$ of $(\mathbb S^4, g_c)$,  $ Y[w_{\varphi}] = Y[w]$, 
and $I\hskip -1mm I[w_{\varphi}]= I\hskip -1mm I[w]$. 

There is a similar functional determinant and a variational characterization for solutions to the 
prescribing Gaussian curvature problem on  $\mathbb S^2$. Chang, Gursky and Yang proved in \cite{CGY93} that
this functional is bounded on the set of solutions to the prescribing Gaussian curvature problem on  $\mathbb S^2$
for any  positive function $K$ on  $\mathbb S^2$ to be prescribed. Our corollary is in the same spirit.
\begin{cor}
Let $K(x)$ be a given positive  $C^2$ function on $\mathbb S^4$. Then there is a bound $C$ depending on $K$ only through
the $C^2$ norm of $K$, a positive upper and lower bound of $K$ on $\mathbb S^4$,
such that 
\[
| F[w]| \le C
\]
for all admissible solutions $w$ to \eqref{2}.
\end{cor}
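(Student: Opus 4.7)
The plan is to exploit the conformal (quasi-)invariance of $F$ under $\mathrm{Conf}(\mathbb S^4)$ to reduce the bound for an arbitrary admissible $w$ to the bound for a uniformly $W^{2,6}$-controlled representative $v = w_\varphi$ produced by \textbf{Theorem 1}.

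First I verify the near-invariance of $F$. By hypothesis $Y$ and $I\!I$ are strictly conformally invariant under $w \mapsto w_\varphi$, so only $C_K$ shifts. The change of variables $Q = \varphi(P)$, combined with $|d\varphi(P)|^4\, \vo(P) = \vo(Q)$ and $e^{4w_\varphi(P)} = e^{4w(\varphi(P))} |d\varphi(P)|^4$, gives
\[
\int_{\mathbb S^4} K(P)\, e^{4w_\varphi(P)}\, \vo = \int_{\mathbb S^4} K(\varphi^{-1}(Q))\, e^{4w(Q)}\, \vo.
\]
Since $\min K \le K \circ \varphi^{-1} \le \max K$, this forces
\[
|C_K[w_\varphi] - C_K[w]| \le 3\log\bigl(\max K / \min K\bigr),
\]
so $|F[w] - F[w_\varphi]|$ is controlled by $K$ alone.

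Next I use \textbf{Theorem 1} to produce, for each admissible $w$, a conformal automorphism $\varphi$ with $v := w_\varphi$ satisfying $\|v\|_{W^{2,6}(\mathbb S^4)} \le C(K)$. If no such $\varphi$ existed for some sequence $w_j$, one would have $\max w_j \to \infty$ (the alternative, $\max w_j$ bounded, is ruled out below), and \textbf{Theorem 1} would then furnish $\varphi_j$ for which $w_{j,\varphi_j} - \tfrac14 \log(6/K(P_j))$ is bounded in $W^{2,6}$, a contradiction. Granted such a $v$, the Sobolev embedding $W^{2,6}(\mathbb S^4) \hookrightarrow C^{1,\alpha}(\mathbb S^4)$ yields $\|v\|_{L^\infty}, \|\grad v\|_{L^\infty} \le C$ and $\|\Delta v\|_{L^6} \le C$. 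Substituting, each integrand in $I\!I[v]$ and $Y[v]$ is bounded in $L^1$, and $C_K[v]$ is bounded via the two-sided positive bounds on $K$ and $e^{4v}$, so $|F[v]| \le C$. Combining,
\[
|F[w]| \le |F[v]| + 3\log\bigl(\max K / \min K\bigr) \le C.
\]

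The main obstacle is the upgrade from \textbf{Theorem 1}'s blow-up profile statement to a uniform centering statement valid for \emph{every} admissible solution. Specifically, one must rule out the scenario where $\max w_j$ remains bounded while $\|w_j\|_{W^{2,6}}$ is unbounded. This combines the integral identity $\int_{\mathbb S^4} K e^{4w}\, \vo = \mathrm{const}$ (the Gauss--Bonnet-type relation for $\sigma_2$ on the locally conformally flat $\mathbb S^4$) with a Harnack-type estimate for admissible solutions of \eqref{2}: a one-sided bound on $w$ controls $w$ on both sides and hence, via elliptic regularity at an admissible solution, gives two-sided $W^{2,6}$ control. Every other step reduces to direct substitution.
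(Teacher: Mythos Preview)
Your argument is correct and follows essentially the same route as the paper: exploit the exact invariance of $Y$ and $I\!I$ under $w\mapsto w_\varphi$, reduce to a centered representative $v$ with good Sobolev control, and treat the non-blowing-up case separately via a Harnack estimate. Two small differences are worth noting. First, the paper works with the $W^{2,3}$ estimate rather than $W^{2,6}$, which already suffices to control every term in $F$; this is slightly more economical since the $W^{2,3}$ bound needs only $\int|\nabla_0 K|^2$ rather than $\int|\nabla_0 K|^4$. Second, in the bounded case $\max w\le B$, the paper is more explicit than your appeal to ``elliptic regularity at an admissible solution'': it invokes the Harnack estimate from \cite{Han04} for the lower bound, then the inequality \eqref{inegrad} to control $\int|\nabla w|^4$, and finally Remark~\ref{rem:w23} for the $W^{2,3}$ bound. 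Your route via $L^\infty$ control followed by the local $C^{2,\alpha}$ estimates of \cite{GW1} is equally valid, but for a fully nonlinear equation ``elliptic regularity'' is not a black box and should be named. Also note that for solutions of \eqref{2} the Gauss--Bonnet identity actually gives $C_K[w]=3\log 6$ exactly, so your bound $|C_K[w_\varphi]-C_K[w]|\le 3\log(\max K/\min K)$, while correct, is more than you need.
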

 {\bf Theorem 1} will be established using blow up analysis, Liouville type classification results of entire solutions,
and integral type estimates for such fully nonlinear equations from  \cite{CGY1} and \cite{Han04}.
{\bf Theorem 2} will be established using a  weaker version of
{\bf Theorem 1} and  a Kazdan-Warner type identity satisfied by the solutions.
The weaker
version of {\bf Theorem 1}  only needs to establish
\begin{equation}\tag{\ref{ptws}$'$}\label{ptws'}
v_j(P)-\frac 14 \frac{6}{K(P_j)} \to 0 \quad \text{pointwise on $\mathbb S^4 \setminus \{-P_j\}$,
and bounded in $L^{\infty}(\mathbb S^4)$,}
\end{equation}
instead of \eqref{ptws}, \eqref{grad4} and the $W^{2,6}$ norm estimates.
A degree argument for a fully nonlinear operator
associated with \eqref{2} and {\bf Theorem 2} will be used to establish {\bf Theorem 3}. 
To streamline our presentation, we will first outline the main steps for proving {\bf Theorem 3}, 
assuming  {\bf Theorem 2} and all the other needed ingredients.
In the remaining sections, we will first provide a proof for  
\eqref{ptws'} in {\bf Theorem 1} and  for {\bf Theorem 2}, before
finally providing a proof for the $W^{2,6}$ norm estimates in {\bf Theorem 1} and for {\bf Corollary 1}.

\vskip12pt
\section{Proof of Theorem \ref{exist}}
\vskip10pt

The first and third parts of {\bf Theorem \ref{exist}} is contained in 
\cite{CY91} and \cite{CGY93}. 
We will establish the second part of {\bf Theorem \ref{exist}} by formulating
the existence of a solution to \eqref{2} as a degree problem for a nonlinear map and linking
the degree of this map to that of $G$.

By a fibration result from \cite{CY87}, \cite{CY91},
\cite{CL} and \cite{YL95}, see also \cite{A79}, \cite{O82} for
early genesis of these ideas, 
if we define
\[
\mathcal S_0 = \{ v \in C^{2, \alpha}(\mathbb S^4):\, \int_{\mathbb S^4} e^{4v(x)} x\, \vo = 0\},
\]
then the map
$\pi: (v, \xi) \in \mathcal S_0 \times B \mapsto C^{2, \alpha}(\mathbb S^4)$ defined by
\[
\pi (v, \xi) = v \circ \varphi_{P,t}^{-1} + \ln | d \varphi_{P,t}^{-1}|,
\]
with $B$ denoting the open unit ball in $\mathbb R^5$ and $\xi=rP$, $P \in \mathbb S^4$, $r=(t-1)/t$, $t\ge 1$,
is a $C^2$ diffeomorphism from $\mathcal S_0 \times B $ \emph{onto} $ C^{2, \alpha}(\mathbb S^4)$. 
Thus $(v, P, t)\in \mathcal S_0 \times \mathbb S^4 \times [1, \infty)$ provide global coordinates for 
$ C^{2, \alpha}(\mathbb S^4)$ (with a coordinate singularity at $t=1$, similar to the coordinate singularity
of polar coordinates at $r=0$) through
\[
w=v \circ \varphi^{-1}_{P,t} + \ln |d \varphi^{-1}_{P,t}|.
\]
 $w$ solves \eqref{2}  with $K$ replaced by $\Ks$ iff $v$ solves
\begin{equation}\label{treq}
\sigma_2(A_v)=\Ks \circ \varphi_{P,t} e^{4v}.
\end{equation}
Then the estimates for $w$ in {\bf Theorem \ref{chy2}} turn into the following estimates for $v$ and $t$.
\begin{proposition}
Assume that $K$ is a positive $C^2$ function on $\mathbb S^4$ satisfying the non-degeneracy condition \eqref{nd},
and let $w$ be a solution to \eqref{2} with $K$ replaced by $\Ks$ and $(v, P, t)\in \mathcal S_0\times \mathbb S^4
\times [1, \infty)$ be the coordinates of
$w$ defined in the paragraph above. Then there exist $t_0$ and  $\epsilon (s)>0$ with $\lim_{s\to 0} \epsilon (s) =0$, such that
\begin{equation}\label{pro1bd}
t \le t_0 \quad \text{ and } \quad ||v||_{C^{2, \alpha}(\mathbb S^4)} < \epsilon (s).
\end{equation}
\end{proposition}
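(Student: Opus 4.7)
The plan is to combine the uniform a priori estimates from \textbf{Theorem \ref{chy2}} with an analysis of the centering condition $\int e^{4v} x \, \vo = 0$ defining $\mathcal{S}_0$, and then invoke compactness together with a rigidity statement at $s = 0$ to force $v$ to collapse as $s \to 0$.

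First, \textbf{Theorem \ref{chy2}} supplies $\|w\|_{C^{2,\alpha}(\mathbb{S}^4)} \le C_0$ uniformly in $s \in (0,1]$, with $C_0$ depending only on the data listed. A standard elliptic bootstrap for the (elliptic at admissible solutions) fully nonlinear equation \eqref{2'} upgrades this to a uniform $C^{3,\alpha'}$ bound for some $\alpha' > 0$; the extra regularity is used only in the final interpolation step.

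Next, to bound $t$: pushing $\int e^{4v(y)} y \, \vo(y) = 0$ through the change of variables $x = \varphi_{P,t}(y)$ and using both $v(y) = w(\varphi_{P,t}(y)) + \ln|d\varphi_{P,t}(y)|$ and the conformal Jacobian rewrites the centering as
\[
\int_{\mathbb{S}^4} e^{4w(x)} \, \varphi^{-1}_{P,t}(x) \, \vo(x) = 0.
\]
Since $\|w\|_{C^0}$ is uniformly bounded, $e^{4w}$ is bounded between positive constants. If $t_s \to \infty$ along some sequence with $P_s \to P_*$, then $\varphi^{-1}_{P_s,t_s}$ degenerates pointwise to $-P_*$ on $\mathbb{S}^4 \setminus \{P_*\}$, so dominated convergence drives the displayed integral to $-P_* \int e^{4w} \vo \neq 0$, contradicting centering. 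Hence $t \le t_0 = t_0(C_0)$, and then the formula $v = w \circ \varphi_{P,t} + \ln|d\varphi_{P,t}|$, together with uniform control of $\varphi_{P,t}$ on the compact parameter set $[1,t_0]$, transfers the uniform $C^{3,\alpha'}$ bound from $w$ to $v$.

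Finally, for $\epsilon(s) \to 0$ as $s \to 0$: take any sequence $s_j \to 0$; by Arzel\`a--Ascoli and compactness of $\mathbb{S}^4 \times [1, t_0]$, a subsequence satisfies $v_{s_j} \to v_*$ in $C^{2,\beta}$ and $(P_{s_j}, t_{s_j}) \to (P_*, t_*)$. Because $\Ks = 6 + s(K - 6) \to 6$ uniformly, the limit $v_*$ is an admissible solution to $\sigma_2(A_{v_*}) = 6 \, e^{4v_*}$ on $\mathbb{S}^4$. The Obata/Viaclovsky-type rigidity for constant $\sigma_2$ on the round sphere identifies such solutions with the conformal pullbacks $\ln|d\psi|$ of $g_c$, and the centering $v_* \in \mathcal{S}_0$ pins down $\psi$ to an isometry, so $v_* \equiv 0$. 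Interpolating between the uniform $C^{3,\alpha'}$ bound and the $C^0$ convergence $v_{s_j} \to 0$ gives $\|v_{s_j}\|_{C^{2,\alpha}} \to 0$, and a standard full-sequence argument yields the desired modulus $\epsilon(s)$. The main obstacle will be the centering step: turning the abstract $\mathcal{S}_0$-constraint into the explicit integral identity above and quantifying the degeneration of $\varphi^{-1}_{P,t}$ as $t \to \infty$ is morally the same center-of-mass mechanism used in \cite{CGY93} and \cite{YL95} for the semilinear problems, but here it must be coupled cleanly to the fully nonlinear a priori estimates of \textbf{Theorem \ref{chy2}}.
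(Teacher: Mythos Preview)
Your proposal is correct and follows essentially the same route as the paper: uniform $C^{2,\alpha}$ bounds on $w$ from \textbf{Theorem~\ref{chy2}}, the rewriting of the centering condition as $\int e^{4w}\varphi_{P,t}^{-1}(x)\,\vo=0$ together with degeneration of $\varphi_{P,t}^{-1}$ to bound $t$, transfer of bounds to $v$, and then compactness plus rigidity of $\sigma_2(A_{v_\infty})=6e^{4v_\infty}$ with $v_\infty\in\mathcal S_0$ to force $v_\infty\equiv 0$. The only cosmetic difference is that the paper bootstraps higher regularity on the $v$ side via the equation $\sigma_2(A_v)=\Ks\circ\varphi_{P,t}\,e^{4v}$ (after securing $t\le t_0$), whereas you bootstrap on the $w$ side first and then interpolate; both achieve the needed $C^{2,\alpha}$ convergence.
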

A proof for {\bf Proposition 1} will be postponed to the end of the next section.

We treat \eqref{treq} as a  nonlinear map
\[
\Fs[v, \xi] := e^{-4 v(x)} \sigma_2(g_c^{-1} \circ \left[A_{g_c} -  \grad^2v+dv \otimes dv - 
\frac 12 |\grad v|^2 g_c   \right])-\Ks \circ \varphi_{P,t}, 
\]
from $\mathcal S_0 \times B$ into $C^{\alpha}(\mathbb S^4)$, for $0<s\le 1$, where  $\xi =(t-1)P/t \in B$.
{\bf Proposition 1} implies that there is a neighborhood $\mathcal N \subset \mathcal S_0$ of $0 \in  \mathcal S_0$
and $0<r_0=(t_0-1)/t_0<1$ such that $\Fs$ does not have a zero on 
$\partial ( \mathcal N \times B_{r})$ for all $r_0\le r <1$ and $0<s\le 1$. 
According to \cite{YL89}, there is a well defined degree for $\Fs$  on $\mathcal N \times B_{r_0}$ and 
it is independent of $0<s\le 1$. We will compute this degree of $\Fs$, for $s>0$ small, through the 
degree of a finite dimensional map.

We first use the implicit function theorem to define this map and link the solutions to \eqref{treq} to the zeros
of this map. 
Note that $F^{[0]}[0, (t-1)P/t] =0$ and $D_{v}F^{[0]}[0, (t-1)P/t](\eta)= -6 \Delta \eta - 24 \eta$.
If $\Pi$ denotes the projection from $C^{\alpha}(\mathbb S^4)$ into 
\[
Y:= \{ f \in C^{\alpha}(\mathbb S^4): \int_{\mathbb S^4} f x_j \, \vo =0\quad \text{for $j=1,
\cdots, 5$}\}
\]
defined by $\Pi(f)= f - 5|\mathbb S^4|^{-1} \sum_{j=1}^5 \left( \int_{\mathbb S^4} f x_j \, \vo \right) x_j$,
then we can apply the implicit function theorem to $\Pi \circ \Fs$ at $v=0$ to conclude
\begin{proposition}
There exist 
some neighborhood $\mathcal N_{\epsilon} \subset \mathcal N$ of $0\in \mathcal S_0$ and  $s_0>0$,
such that for all $0<s<s_0$, $(P,t) \in \mathbb S^4 \times [1,t_0]$, 
there exists a \emph{unique} $v=v(x;P,t,s) \in \mathcal N_{\epsilon}$, depending differentiably on
$(P,t,s)$ such that 
\begin{equation}\label{pro}
\Pi\circ \Fs [v(x;P,t,s), (t-1)P/t] = 0.
\end{equation}
Furthermore, there exists some $C>0$ such that, for $0<s\le s_0$, $1\le t\le t_0$, 
\begin{equation}\label{im}
||v(x;P,t,s)||_{C^{2,\alpha}(\mathbb S^4)} \le C ||\Ks \circ \varphi_{P,t} -6||_{C^{\alpha}(\mathbb S^4)}
=C s ||K\circ \varphi_{P,t}-6||_{C^{\alpha}(\mathbb S^4)}.
\end{equation}
\end{proposition}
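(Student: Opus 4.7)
The plan is to apply the implicit function theorem (IFT) with parameters to the map $\Pi \circ F^{[s]} : \mathcal{S}_0 \times B \times [0,1] \to Y$ at the reference point $(v, \xi, s) = (0, \xi, 0)$, where $\xi = (t-1)P/t$. The first task is to identify the tangent space $T_0 \mathcal{S}_0$ at $v=0$: differentiating the constraint $\int e^{4v} x_j \, \vo = 0$ along a variation $\eta$ gives $T_0 \mathcal{S}_0 = \{\eta \in C^{2,\alpha}(\mathbb S^4): \int \eta\, x_j\,\vo = 0, \ j=1,\ldots,5\}$. Thus $T_0 \mathcal{S}_0$ and $Y$ are both the $L^2$-orthogonal complements of $\text{span}\{x_1, \ldots, x_5\}$ in their respective $C^{k,\alpha}$ spaces.

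The key step is to verify that the linearization $L := D_v F^{[0]}[0, \xi] = -6\Delta - 24$, which is independent of $\xi$, is an isomorphism from $T_0 \mathcal{S}_0$ onto $Y$. Using the eigenvalues $\lambda_k = k(k+3)$ of $-\Delta_{g_c}$ on $\mathbb S^4$, the operator $L$ has eigenvalues $6k(k+3) - 24$ on the $k$-th spherical harmonic space, which vanish precisely for $k=1$. Thus $\ker L = \text{span}\{x_1,\ldots,x_5\}$, and by Fredholm theory $L$ restricted to $T_0 \mathcal{S}_0$ is injective with image exactly $Y$ (self-adjointness of $L$ identifies cokernel with kernel). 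A Schauder-type bound gives an inverse $L^{-1}: Y \to T_0 \mathcal{S}_0$ with norm independent of $\xi$.

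Having verified the isomorphism, the IFT applied to $\Pi \circ F^{[s]}$ at $(v, s) = (0, 0)$, with parameters $(\xi, s)$ ranging over the compact set $\overline{B}_{r_0} \times [0, s_0]$, yields a unique $v = v(x; P, t, s) \in \mathcal{N}_{\epsilon}$ satisfying \eqref{pro}, depending differentiably on $(P, t, s)$. Uniformity in $(P, t)$ is automatic since $L$ does not depend on $\xi$ and $F^{[s]}[0, \xi] = 6 - K^{[s]} \circ \varphi_{P,t}$ is continuous in $\xi$.

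The quantitative bound \eqref{im} then follows by writing the equation in the form
\[
L v = \Pi\bigl( K^{[s]} \circ \varphi_{P,t} - 6 \bigr) + N(v, P, t, s),
\]
where $N$ collects the quadratic and higher-order terms in $v$ coming from the nonlinearity of $\sigma_2$, the exponential factor $e^{-4v}$, and the term $K^{[s]} - 6$. Since $\|N(v,\cdot)\|_{C^\alpha} \le C \|v\|_{C^{2,\alpha}}^2$ on $\mathcal N_{\epsilon}$, inverting $L$ and absorbing the nonlinear contribution into the left-hand side (possible by shrinking $\mathcal N_\epsilon$ and $s_0$) yields
\[
\|v\|_{C^{2,\alpha}} \le C \|K^{[s]} \circ \varphi_{P,t} - 6\|_{C^{\alpha}} = C s \|K \circ \varphi_{P,t} - 6\|_{C^{\alpha}},
\]
as claimed. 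The main conceptual point is the spectral identification of $\ker L$ with the first spherical harmonics and its matching with the codimensions built into $\mathcal{S}_0$ and $Y$; once this is in place the rest is a routine IFT-with-parameters argument, with no essential obstacle.
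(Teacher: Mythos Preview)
Your proposal is correct and follows exactly the approach the paper takes: the paper simply records $F^{[0]}[0,(t-1)P/t]=0$ and $D_vF^{[0]}[0,(t-1)P/t](\eta)=-6\Delta\eta-24\eta$, then asserts that the implicit function theorem applied to $\Pi\circ F^{[s]}$ at $v=0$ yields the proposition, leaving all details to the reader. You have supplied precisely those details---the identification of $T_0\mathcal S_0$, the spectral computation showing $\ker(-6\Delta-24)=\mathrm{span}\{x_1,\dots,x_5\}$, the resulting isomorphism $T_0\mathcal S_0\to Y$, and the standard quadratic-remainder argument for the estimate \eqref{im}---and they are sound.
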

\eqref{pro} implies that
\[
 \Fs [v(x;P,t,s), (t-1)P/t] =\sum_{j=1}^5  \Lambda_j(P,t,s) x_j,
\]
for some Lagrange multipliers $\Lambda_j(P,t,s)$, which depend  differentiably on $(P,t,s)$.
Or, equivalently, 
\begin{equation}\label{lag}
\sigma_2(A_{v(x;P,t,s)}) = \left( \Ks \circ \varphi_{P,t} + \sum_{j=1}^5  \Lambda_j(P,t,s) x_j\right) e^{4v(x;P,t,s)}.
\end{equation}
A zero of the map $\Ls (P,t) :=(  \Lambda_1 (P,t,s), \cdots, \Lambda_5 (P,t,s))$ corresponds to 
a solution to \eqref{treq}. {\bf Propositions 1} and {\bf 2}
 say that, for  $s_0>0$ small, all solutions $v\in \mathcal S_0$ to \eqref{treq}, for $0<s\le s_0$,
are in $\mathcal N_{\epsilon}$, thus correspond to the zeros of the map  $\Ls (P,t) $.

\begin{remark}
$||K\circ \varphi_{P,t}-6||_{C^{\alpha}(\mathbb S^4)}$ could become unbounded when $t\to \infty$;
yet thanks to the bound $1\le t\le t_0$ from \emph{\textbf{Proposition 1}}, it remains bounded 
in terms of $||K||_{C^{\alpha}(\mathbb S^4)}$ in the range
$1\le t\le t_0$.  Note also that $||K\circ \varphi_{P,t}-6||_{L^{p}(\mathbb S^4)}$ remains bounded
in terms of $||K||_{L^{\infty}(\mathbb S^4)}$  even in the range $1\le t < \infty$. It is essentially this bound
and the applicability of $W^{2,p}$ estimates in the semilinear setting of \cite{CGY93} and \cite{YL95} which
allowed them to handle their cases without using the bound $1\le t\le t_0$.
\end{remark}
\begin{remark}
The implicit function theorem procedure here  works also in the setting of
\cite{CY91}, \cite{CGY93} and \cite{YL95} using $W^{2,p}$ space,
as does  Proposition 1  in the setting of \cite{CGY93} and \cite{YL95},
and can be used to simplify the arguments there.
\end{remark}

At this point, we need the following Kazdan-Warner type identity for solutions to \eqref{2}.
\begin{proposition}\label{prop:KW}
Let $w$ be a solution to \eqref{2}. Then, for $1\le j \le 5$,
\begin{equation}\label{kw}
\int_{\mathbb S^4} \langle \grad K(x), \grad x_j \rangle e^{4w(x)} \,\vo =0.
\end{equation}
\end{proposition}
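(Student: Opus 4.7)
The plan is to exploit the variational characterization of solutions to \eqref{2} as critical points of the functional determinant $F[w] = Y[w] - I\hskip -1mm I [w] + C_K[w]$ introduced just before \textbf{Corollary 1}, together with the conformal invariances $Y[w_\varphi] = Y[w]$ and $I\hskip -1mm I [w_\varphi] = I\hskip -1mm I [w]$ recorded there. The idea: flow the solution $w$ along the one-parameter group of conformal automorphisms of $\mathbb S^4$ generated by the conformal Killing field $X := \grad_{g_c} x_j$, and read off the identity from the fact that the value of $F$ along this curve is stationary at $t = 0$.

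Concretely, let $\varphi_t$ be this family of conformal automorphisms and set $w_t := w \circ \varphi_t + \ln |d \varphi_t|$ as in \eqref{wtransf}, so that $t \mapsto w_t$ is a smooth curve in $C^2(\mathbb S^4)$ with $w_0 = w$ and $\dot w_0 = X(w) - x_j$ (the $-x_j$ contribution coming from $\frac{d}{dt}\big|_{t=0} \ln |d\varphi_t|$, since $\mathcal L_X g_c = -2 x_j g_c$). Because $w$ is a critical point of $F$, $\frac{d}{dt}\big|_{t=0} F[w_t] = \delta F[w](\dot w_0) = 0$. On the other hand, the stated conformal invariances give $Y[w_t] \equiv Y[w]$ and $I\hskip -1mm I [w_t] \equiv I\hskip -1mm I [w]$, so only $C_K[w_t] = 3 \log \mint K e^{4 w_t}\, \vo$ contributes to the derivative. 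The conformal change of variable $y = \varphi_t(x)$, under which $e^{4 w_t(x)} \vo(x)$ pulls back to $e^{4 w(y)} \vo(y)$, rewrites this integral as $\mint (K \circ \varphi_t^{-1}) e^{4 w}\, \vo$. Differentiating at $t = 0$ with $\frac{d}{dt}\big|_{t=0} (K \circ \varphi_t^{-1}) = -X(K)$ yields
\[
0 \;=\; \frac{d}{dt}\Big|_{t=0} F[w_t] \;=\; -\, \frac{3 \int_{\mathbb S^4} X(K)\, e^{4 w}\, \vo}{\int_{\mathbb S^4} K e^{4 w}\, \vo},
\]
and since $X(K) = \langle \grad K, \grad x_j \rangle$, this is exactly \eqref{kw}.

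The main subtlety lies in the two inputs taken from the functional-determinant framework: that solutions of \eqref{2} really are critical points of $F$ in the strong sense $\delta F[w](\eta) = 0$ for every admissible $\eta$ (including the specific $\eta = \dot w_0 = X(w) - x_j$, which is non-constant for generic $w$), and that $Y$ and $I\hskip -1mm I$ are strictly invariant — not merely invariant modulo additive constants — under the conformal action $w \mapsto w_\varphi$. Both are part of the theory of conformally covariant fourth-order operators summarized in the paragraph preceding \textbf{Corollary 1}. A possible alternative strategy would be to prove \eqref{kw} directly by multiplying \eqref{2} by a test function built from $X(w)$ and integrating by parts, in the spirit of Bourguignon--Ezin or Viaclovsky; however, the naive version of that argument is tautological in the critical dimension $n = 2k = 4$ (both sides collapse to the same identity after invoking the divergence-free property of the Newton tensor $T_1$), so the variational input above appears to be the most economical route.
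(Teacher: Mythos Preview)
Your argument is correct and is essentially the same as the paper's: you use that a solution $w$ is a critical point of $F = Y - I\hskip -1mm I + C_K$, flow along the one-parameter family $\varphi_t$ of conformal automorphisms, invoke the conformal invariance of $Y$ and $I\hskip -1mm I$, and differentiate $C_K[w_{\varphi_t}] = 3\log \mint (K\circ\varphi_t^{-1})e^{4w}\,\vo$ at $t=0$ to obtain \eqref{kw}. The paper presents this a bit more tersely and does not write out the explicit form of $\dot w_0$ or the change of variables, but the logical content is identical.
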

{\bf Proposition~\ref{prop:KW}} is a special case of the results in \cite{V00} and \cite{Han06}. But in the special case
of $\mathbb S^4$, it is a direct consequence of the  variational characterization of the solution to \eqref{2},
as given after the statement of Theorem~\ref{chy}. A solution $w$ to \eqref{2}
is a critical point of $F[w]= Y[w]-I\hskip -1mm I[w] +C_K[w]$ there, thus satisfies,
for any one-parameter family of conformal diffeomorphisms $\varphi_s$ of $\mathbb S^4$ with $\phi_0=$Id,
\[
\frac{d}{ds}\Big|_{s=0} F[w_{\varphi_s}]=0,
\]
with $w_{\varphi_s}= w \circ \varphi_s +  \log |d \varphi_s|$. Since
 $ Y[w_{\varphi_s}] = Y[w]$
and $ I\hskip -1mm I[w_{\varphi_s}]= I\hskip -1mm I[w]$,
 a solution $w$ of \eqref{2} thus satisfies
\[
\frac{d}{ds}\Big|_{s=0} C_K[w_{\varphi_s}]=\frac{d}{ds}\Big|_{s=0}  
\left(\mint K \circ \varphi_s^{-1}  e^{4w}\,  d \/ vol_{g_c} \right) =0,
\]
which is \eqref{kw}.

Applying \eqref{kw} to $v(x;P,t,s)$, a solution to \eqref{lag}, we obtain, 
\[
\int_{\mathbb S^4} \langle \grad \left(\Ks \circ \varphi_{P,t} (x)+\sum_{j=1}^5  \Lambda_j(P,t,s) x_j\right),  
\grad x_k \rangle e^{4 v(x;P,t,s)} \,\vo =0, 
\quad \text{for $1\le k \le 5$,}
\]
from which we obtain, for $1\le k \le 5$,
\[
\begin{split}
&-\sum_{j=1}^5  \Lambda_j(P,t,s) \int_{\mathbb S^4} \langle \grad  x_j, \grad x_k \rangle e^{4 v(x;P,t,s)} \,\vo \\
=& \int_{\mathbb S^4} \langle \grad \left(\Ks \circ \varphi_{P,t} (x) \right), \grad x_k \rangle e^{4 v(x;P,t,s)} \,\vo\\
=& s\int_{\mathbb S^4} \langle \grad \left(K \circ \varphi_{P,t} (x) \right), \grad x_k \rangle e^{4 v(x;P,t,s)} \,\vo.
\end{split}
\]
As in \cite{CY91}, \cite{CGY93} and \cite{YL95}, we define
\[
\As (P,t)= (4|\mathbb S^4|)^{-1} \int_{\mathbb S^4} \langle \grad \left(K \circ \varphi_{P,t} (x) \right),
\grad x \rangle e^{4 v(x;P,t,s)} \,\vo\in \mathbb R^5.
\]
Since 
\[
 \left(\int_{\mathbb S^4} \langle \grad  x_j, \grad x_k \rangle e^{4 v(x;P,t,s)} \,\vo \right)
\] 
is positive definite, we conclude that
\[
\deg(\As, B_{r_0}, O)=-\deg(\Ls, B_{r_0}, O),
\]
for $s_0>s>0$ provided that one of them is well defined.

Using $ v(x;P,t,s) \in \mathcal S_0$, and $\Delta x=-4 x$ on $\mathbb S^4$, we have, as in \cite{CY91},
\cite{CGY93} and \cite{YL95},
\[
\As (P,t)= G(P,t) + I + \Pi,
\]
where
\[
I= |\mathbb S^4|^{-1}  \int_{\mathbb S^4} \left(K \circ \varphi_{P,t} (x) - K(P) \right)
x \left( e^{4  v(x;P,t,s)} -1 \right)  \,\vo,
\]
and
\[
\Pi= - (4|\mathbb S^4|)^{-1} \int_{\mathbb S^4} \left(K \circ \varphi_{P,t} (x) - K(P) \right)
 \langle \grad x,  \grad  e^{4  v(x;P,t,s)} \rangle  \,\vo.
\]
We could have fixed $t_0\ge t_1$ such that $ G(P,t) \ne 0$ for $t = t_0$, and 
there will be a $\delta>0$ such that
$| G(P,t)| \ge \delta $ for $t=t_0$. 
Since \eqref{im} implies that
\[
||v(x;P,t,s)||_{C^{2,\alpha}(\mathbb S^4)} = O(s), \quad \text{uniformly for $(P,t)\in \mathbb S^4 \times [1, t_0]$,}
\]
we find that, by fixing $s_0>0$ small if necessary,
\[
|I|+|\Pi| \le \frac{1}{2} |G(P,t)|, \quad \text{for $0<s\le s_0$ and $t=t_0$.}
\]
This implies that $\As (P,t) \cdot G(P,t) >0$ for $0<s\le s_0$ and $t=t_0$. Therefore
\[
-\deg ( \Ls, B_{r_0}, O)=\deg ( \As, B_{r_0}, O) = \deg (G, B_{r_0}, O)\ne 0,
\]
 for $0<s\le s_0$. Finally, we now prove
\begin{equation}\label{ddf}
\deg(\Fs, \mathcal N\times B_{r_0}, O)
=- \deg ( \Ls, B_{r_0}, O),
\end{equation}
for $0<s\le s_0$, 
from which  follows  the existence of a solution to \eqref{2}.

The verification of \eqref{ddf} is routine, but requires several steps. 
First, we may perturb $K$, if necessary, within
 the class of functions satisfying the conditions in {\bf Theorem \ref{exist}} such that
the corresponding $G(P,t)$ has only  isolated and non-degenerate zeros in $B_{r_0}(O)$.
We will prove momentarily that for $s>0$ small, the zeros of $\Ls$ for  $s>0$ small will be close to
the zeros of $G(P,t)$ and are isolated, non-degenerate. Therefore
the zeros of $\Fs$ in $\mathcal N\times B_{r_0}$ are isolated and non-degenerate.
This can be argued as follows. First, it follows from \eqref{lag}  that
\[
\Ls (\xi) \cdot x =  (\text{Id}-\Pi) \left( e^{-4v(x;\xi, s)}\sigma_2(A_{v(x;\xi, s)})-\Ks \circ \varphi_{P,t}\right).
\]
Using \eqref{im}, we can then write
\[
e^{-4v(x;\xi, s)}\sigma_2(A_{v(x;\xi, s)})= 6-6\Delta v(x;\xi, s) - 24 v(x;\xi, s) + Q(v(x;\xi, s)),
\]
with $|| Q(v(x;\xi, s))||_Y\lesssim ||v(x;\xi, s)||^2_X \lesssim s^2$. Therefore,
using $(\text{Id}-\Pi)(1)=(\text{Id}-\Pi)\left(6\Delta v(x;\xi, s) + 24 v(x;\xi, s)\right)=0$,
and
\[
\Ls (\xi) = 5|\mathbb S^4|^{-1} \int_{\mathbb S^4} \left(\Ls (\xi) \cdot x\right) x \,\vo,
\]
we have
\[
\Ls (\xi) = -5s G(P,t) + 5 |\mathbb S^4|^{-1} \int_{\mathbb S^4} \left[  (\text{Id}-\Pi) \left(  Q(v(x;\xi, s)) \right)
\right] x \,\vo,
\]
with $|(\text{Id}-\Pi) \left(  Q(v(x;\xi, s)) \right)|\lesssim s^2$, so the zeros of $\Ls (\xi)$ for $s>0$ small
are close to the zeros of $G(P,t)$. We can further use the implicit function theorem to prove that
for $s>0$ small there is a (unique)  non-degenerate zero of $\Ls (\xi)$ near each zero of  $G(P,t)$.

\begin{remark} This argument shows that, for each non-degenerate zero of $G(P,t)$, if we associate $(P,t)$
with the center of mass of $\varphi_{P,t}$,
\[
C.M (\varphi_{P,t}) := |\mathbb S^4|^{-1} \int_{\mathbb S^4} \varphi_{P,t}(x)\, \vo \in B_1(O)
\]
as a geometric representation of $(P,t)$, then for $s>0$ small, there is a unique solution $w$ to \eqref{2}
whose center of mass approaches $C.M (\varphi_{P,t})$, for our argument gives rise to a solution
\[
w(x) = v(\cdot ;P',t')\circ \varphi_{P',t'}^{-1}(x) + \ln |d  \varphi_{P',t'}^{-1}(x)|
\]
with $(P',t')$ approaching $(P,t)$, and $ v(x;P',t')$ approaching $0$ as $s\to 0$, thus the center of mass
of $w$ is 
\[
|\mathbb S^4|^{-1} \int_{\mathbb S^4} e^{4w(x)}x\,\vo = |\mathbb S^4|^{-1} \int_{\mathbb S^4} e^{4v(y)} 
\varphi_{P',t'}(y) \,\vo \to C.M (\varphi_{P,t})
\]
as $s \to 0$.
\end{remark}
Now $\deg(\Fs, \mathcal N\times B_{r_0}, O)$ is well
defined in the manner of \cite{YL89}, and according to Propositions 2.1--2.4 of \cite{YL89}, 
\[
\deg(\Fs, \mathcal N\times B_{r_0}, O) =\sum_{\xi\in B_{r_0}(O): \Ls (\xi)=0} \ind (D\Fs [v(x;\xi, s), \xi]),
\]
where  $\ind (D\Fs [v(x;\xi, s), \xi])$ refers to the index of the linear operator
$D\Fs [v(x;\xi, s), \xi]$, and is computed as $(-1)^{\beta}$, with $\beta$ denoting the number of
negative eigenvalues of $D\Fs [v(x;\xi, s), \xi]$. We also have
\[
\deg(\Ls, B_{r_0}, O) =\sum_{\xi\in B_{r_0}(O): \Ls (\xi)=0} \ind (D\Ls(\xi)).
\]

To compute  $D\Fs [v(x;\xi, s), \xi]$, we identify $\xi \in \mathbb R^5$ with $\xi \cdot x \in \text{span}\{x_1,
\cdots, x_5\}$, and write the differential of $\Fs$ in the direction of $\dv$ as
$D_v \Fs [v(x;\xi, s), \xi](\dv)$, or simply $D_v \Fs (\dv)$, and
the differential of $\Fs$ in the direction of $\dxi \cdot x$ as $D_{\xi} \Fs [v(x;\xi, s), \xi](\dxi)$. 
Then
\[
D_{\xi} \Fs [v(x;\xi, s), \xi](\dxi) =-s \dxi \cdot \grad_{\xi} \left(K \circ \varphi_{P,t}\right),
\]
and
\[
D_v \Fs [v(x;\xi, s), \xi](\dv)= M^{ij}[v(x;\xi, s)]\grad^{v(x;\xi, s)}_{ij} \dv -4 \Ks \circ \varphi_{P,t} \dv,
\]
where $ M^{ij}[v(x;\xi, s)]$ stands for the Newton tensor associated with $\sigma_2( e^{-2v(x;\xi, s)}A_{v(x;\xi, s)})$,
and $\grad^{v(x;\xi, s)}_{ij}$ stands for the covariant differentiation in the metric $e^{2v(x;\xi, s)}g_c$.
Thus,
\[
D \Fs [v(x;\xi, s), \xi]( \dv + \dxi \cdot x)=  M^{ij}[v(x;\xi, s)]\grad^{v(x;\xi, s)}_{ij} \dv -
4 \Ks \circ \varphi_{P,t} \dv- s \dxi \cdot \grad_{\xi} \left(K \circ \varphi_{P,t}\right).
\]
At a fixed zero $\xi$ of $\Ls (\xi)=0$,  we 
define a family of deformed linear operators $L_{\tau, s}$ for $0\le \tau \le 1$ by
\[
L_{\tau, s}( \dv + \dxi \cdot x)=  M^{ij}[v^{[\tau]}] \grad^{v^{[\tau]}}_{ij}\dv -4K^{[s\tau]} \circ \varphi_{P,t}\dv - 
s \dxi \cdot \grad_{\xi} \left(K \circ \varphi_{P,t}\right),
\]
where $v^{[\tau]}=\tau v(x;\xi, s)$, and 
$\dv \in  X:=\{ \dv\in C^{2,\alpha}(\mathbb S^4): \int_{\mathbb S^4} \dv (x) x_j =0, j=1, \cdots, 5\}$. 
Then $L_{\tau, s}$ defines self-adjoint operators with respect to the
metric $e^{2v^{[\tau]}}g_c$, thus its eigenvalues are all real. We first assume the

\begin{claim} For $s>0$ small and 
$0\le \tau \le 1$, the spectrum of $L_{\tau, s}$ does not contain zero.
\end{claim}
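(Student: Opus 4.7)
The plan is to establish $\ker L_{\tau,s}=\{0\}$ by a Lyapunov--Schmidt decomposition relative to the splitting $C^{\alpha}(\mathbb{S}^4)=Y\oplus\mathrm{span}\{x_1,\dots,x_5\}$ that defines $\Pi$. At $\tau=s=0$ the $\dv$-part is $A_0:=-6\Delta-24$, whose kernel on $C^{2,\alpha}(\mathbb{S}^4)$ is exactly $\mathrm{span}\{x_j\}$ (the $k=1$ spherical harmonics contribute the eigenvalue $6\cdot 1\cdot 4-24=0$), so $A_0$ restricts to an isomorphism $X\to Y$. Consequently $L_{\tau,s}:X\oplus\mathbb{R}^5\to C^{\alpha}(\mathbb{S}^4)$ is Fredholm of index zero for all $\tau,s$ by continuity from $L_{0,0}$ (whose $5$-dimensional kernel $\mathbb{R}^5$ matches its $5$-dimensional cokernel $\mathrm{span}\{x_j\}$), so triviality of the kernel will imply invertibility.

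Writing $L_{\tau,s}(\dv,\dxi)=A_{\tau,s}\dv-s\,\dxi\cdot\grad_{\xi}(K\circ\varphi_{P,t})$ with $A_{\tau,s}\dv:=M^{ij}[v^{[\tau]}]\grad^{v^{[\tau]}}_{ij}\dv-4K^{[s\tau]}\circ\varphi_{P,t}\,\dv$, the estimate \eqref{im} gives $\|v^{[\tau]}\|_{C^{2,\alpha}}=\tau\,\|v(\cdot;\xi,s)\|_{C^{2,\alpha}}=O(s)$ and $\|K^{[s\tau]}\circ\varphi_{P,t}-6\|_{C^{\alpha}}=O(s)$ uniformly in $\tau\in[0,1]$ and $(P,t)\in\mathbb{S}^4\times[1,t_0]$. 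Hence $A_{\tau,s}=A_0+O(s)$ as a bounded operator $C^{2,\alpha}\to C^{\alpha}$, and a Neumann series argument produces a uniform bound for the inverse of $\Pi\circ A_{\tau,s}:X\to Y$ when $s$ is small. First I would apply $\Pi$ to $L_{\tau,s}(\dv,\dxi)=0$ and solve $\dv=s\,(\Pi A_{\tau,s})^{-1}\Pi\bigl(\dxi\cdot\grad_{\xi}(K\circ\varphi_{P,t})\bigr)$, so $\|\dv\|_{C^{2,\alpha}}=O(s|\dxi|)$. Next, projecting by $\mathrm{Id}-\Pi$ and using $A_0(X)\subset Y$ to rewrite $(\mathrm{Id}-\Pi)A_{\tau,s}\dv=(\mathrm{Id}-\Pi)(A_{\tau,s}-A_0)\dv=O(s^2|\dxi|)$, I obtain $(\mathrm{Id}-\Pi)\bigl(\dxi\cdot\grad_{\xi}(K\circ\varphi_{P,t})\bigr)=O(s|\dxi|)$.

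By the very same computation that yielded $\Ls(\xi)=-5sG(P,t)+O(s^2)$ after Proposition 2, the left-hand side here equals $5|\mathbb{S}^4|^{-1}\bigl(\grad_{\xi}G(\xi)\,\dxi\bigr)\cdot x$. Since $K$ has been perturbed so that $G$ has only isolated, non-degenerate zeros in $B_{r_0}(O)$, and the fixed zero $\xi$ of $\Ls$ converges as $s\to 0$ to such a zero of $G$, $\grad_{\xi}G(\xi)$ is invertible with uniform bound; this forces $\dxi=0$ and then $\dv=0$, proving $\ker L_{\tau,s}=\{0\}$. The main obstacle is making every perturbation estimate \emph{uniform} in the deformation parameter $\tau\in[0,1]$, not merely in $s$. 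This uniformity is exactly what the identities $v^{[\tau]}=\tau\,v(\cdot;\xi,s)$ and $K^{[s\tau]}-6=s\tau(K-6)$ deliver---both are $O(s)$ with constants independent of $\tau$---while the non-degeneracy hypothesis \eqref{nd} supplies the positive lower bound on $|\det\grad_{\xi}G|$ that persists at the nearby zeros of $\Ls$ uniformly as $s\to 0$.
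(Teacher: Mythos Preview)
Your proof is correct and follows essentially the same Lyapunov--Schmidt argument as the paper: project $L_{\tau,s}(\dv+\dxi\cdot x)=0$ by $\Pi$ and $\mathrm{Id}-\Pi$, use the $O(s)$-closeness of $A_{\tau,s}$ to $A_0=-6\Delta-24$ (uniformly in $\tau$ via $v^{[\tau]}=\tau v$ and $K^{[s\tau]}-6=s\tau(K-6)$) to solve $\dv=O(s|\dxi|)$, and then contradict the non-degeneracy of $\grad_\xi G$ at the zero of $G$ to which $\xi$ converges. The only slip is a stray factor $|\mathbb{S}^4|^{-1}$ in your identification of $(\mathrm{Id}-\Pi)\bigl(\dxi\cdot\grad_\xi(K\circ\varphi_{P,t})\bigr)$, which should read $5\,(\grad_\xi G(\xi)\,\dxi)\cdot x$; this is harmless for the argument.
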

Thus $\ind (L_{0,s})=\ind(L_{1,s})
=\ind (D \Fs [v(x;\xi, s), \xi])$. We will next establish 
\begin{equation}\label{index}
\ind (L_{0,s})=(-1)^{1 +  \gamma}, \quad \text{ for $s>0$ small},
\end{equation}
where $\gamma$ is the number of positive eigenvalues of $\grad G(P,t)$ at $\xi$, and
\begin{equation}\label{index2}
\gamma = \text{ the number of negative eigenvalues of $D_{\xi} \Ls (\xi)$ for  $s>0$ small}.
\end{equation}
First note that
\[
L_{0,s}( \dv + \dxi \cdot x)= -6\Delta \dv -24 \dv -s \dxi \cdot \grad_{\xi} \left(K \circ \varphi_{P,t}\right),
\]
so if $\dv + \dxi \cdot x$ is an eigenfunction corresponding to a negative eigenvalue $-\lambda$,
with $\dv \in  X$, then
\[
-6\Delta \dv -24 \dv -s \dxi \cdot \grad_{\xi} \left(K \circ \varphi_{P,t}\right)=-\lambda (\dv + \dxi \cdot x).
\]
Taking projection in $\text{span}\{x_1,\cdots, x_5\}$, we find
\[
-s \grad G(P,t)\dxi =-\frac{\lambda}{5} \dxi,
\]
and taking projection in $X$, we find
\begin{equation}\label{proj2}
-6\Delta \dv -24 \dv -s \Pi \left(\dxi \cdot \grad_{\xi} \left(K \circ \varphi_{P,t}\right)\right)= -\lambda \dv.
\end{equation}
If $\dxi \neq 0$, then $\dxi$ is an eigenvector of $\grad G(P,t)$ with eigenvalue $\frac{\lambda}{5s} >0$;
and if  $\dxi =0$, then $\dv \neq 0$ solves $-6\Delta \dv -24 \dv = -\lambda \dv$, which is possible for 
some $\lambda>0$ iff $-\lambda =-24$ and $\dv = \text{constant}$. Conversely, for any
eigenvector $\dxi \neq 0$ of $\grad G(P,t)$ with eigenvalue $\mu>0$,
the operator $-6\Delta  -24 + 5s\mu$ is an isomorphism from $X$ to $Y$ for $s>0$ small, so we can
solve \eqref{proj2}
as $(-6\Delta  -24 + 5s\mu) \dv = s \Pi \left(\dxi \cdot \grad_{\xi} \left(K \circ \varphi_{P,t}\right)\right)$
for $\dv \in X$ and $\dv + \dxi \cdot x$ becomes an eigenfunction of $L_{0,s}$ with
eigenvalue $- 5s\mu$.  Therefore we conclude \eqref{index}.

We now  establish \eqref{index2} to prove  $\ind (D \Fs [v(x;\xi, s), \xi])=-\ind (D_{\xi}\Ls (\xi))$
for $s>0$ small. 
From \eqref{lag}, which can be written as $\Ls (\xi) \cdot x = \Fs [v(x;\xi, s), \xi])$, we obtain
\[
D_v \Fs (D_{\xi} v(x; \xi, s) (\dxi)) + D_{\xi} \Fs (\dxi) = D_{\xi}\Ls (\xi) (\dxi) \cdot x.
\]
Taking projections in $X$ and $\text{span}\{x_1,\cdots, x_5\}$, respectively, and using 
$D_{\xi} \Fs (\dxi) =-s \dxi \cdot \grad_{\xi} \left(K \circ \varphi_{P,t}\right)$,
we obtain
\begin{equation}\label{pro1}
\Pi \left(D_v \Fs (D_{\xi} v(x; \xi, s) (\dxi))\right) -s \Pi \left( \dxi \cdot \grad_{\xi} \left(K\circ \varphi_{P,t}
\right)\right)=0,
\end{equation}
and
\begin{equation}\label{pro2}
\mint ( \text{Id}-\Pi)  \left(D_v \Fs (D_{\xi} v(x; \xi, s) (\dxi))\right) x\,\vo  -s \grad_{\xi} G(P,t)\dxi
 = \frac{1}{5} D_{\xi}\Ls (\xi) (\dxi).
\end{equation}
Writing
\[
D_v \Fs (D_{\xi} v(x; \xi, s) (\dxi))= (-6\Delta -24)(D_{\xi} v(x; \xi, s)(\dxi)) +\varTheta(D_{\xi} v(x; \xi, s)(\dxi)),
\]
we find, using \eqref{im}, that $||\varTheta(D_{\xi} v(x; \xi, s)(\dxi))||_Y\lesssim s ||D_{\xi} v(x; \xi, s)(\dxi)||_X$.
Thus $\Pi \left(D_v \Fs (\cdot)\right): X\mapsto Y$ is an isomorphism for $s>0$ small and has
an inverse $\Psi$, and we can solve
$D_{\xi} v(x; \xi, s) (\dxi)$ in terms of $\dxi$ from \eqref{pro1}:
\[
D_{\xi} v(x; \xi, s) (\dxi) = \Psi \left( s \Pi \left( \dxi \cdot \grad_{\xi} \left(K\circ \varphi_{P,t} 
\right)\right) \right) := s \Upsilon (\dxi).
\]
Using this in \eqref{pro2}, we find
\[
\begin{split}
\frac{1}{5} D_{\xi}\Ls (\xi) 
&= -s \grad_{\xi} G(P,t) + s \mint \left[(\text{Id}-\Pi) \circ \varTheta \circ \Upsilon \right] x\, \vo\\
&= -s \left(\grad_{\xi} G(P,t) + O(s)\right).\\
\end{split}
\]
Thus for $s>0$ small, $\gamma$ matches the number of negative eigenvalues of $ D_{\xi}\Ls (\xi)$, and
we can conclude that $\ind(D\Fs( v(x; \xi, s),\xi))=-\ind(D_{\xi}\Ls (\xi))$.

In the remainder of this section, we provide proof for our {\bf Claim} above, 
leaving the proof for  {\bf Proposition 1} to the end of the next section.
\begin{proof}[Proof of \emph{\textbf{Claim}}] 
Suppose that for (a sequence of) $s>0$ small and some $0\le \tau \le 1$, $L_{\tau, s}$ has $\dv +\dxi \cdot x$, with
$\dv \in X$, $\dxi \in \mathbb R^5$, as eigenfunction with zero eigenvalue.
Then, taking projections in $\text{span}\{x_1, \cdots, x_5\}$ and $X$, respectively, we obtain
\begin{equation}\label{pro1'}
\Pi\left[ M^{ij}[v^{[\tau]}]\grad^{v^{[\tau]}}_{ij} \dv - 4 K^{[s\tau]} \circ \varphi_{P,t} \dv\right] -s \Pi\left[
\dxi \cdot \grad_{\xi}\left(K \circ \varphi_{P,t}\right)\right]=0,
\end{equation}
and
\begin{equation}\label{pro2'}
\mint (\text{Id}-\Pi) \left[ M^{ij}[v^{[\tau]}]\grad^{v^{[\tau]}}_{ij} \dv - 4 K^{[s\tau]} \circ \varphi_{P,t} \dv\right]
x\,\vo  -s \grad_{\xi} G(P,t) \dxi =0.
\end{equation}
Using \eqref{im} again, we find
\[
 M^{ij}[v^{[\tau]}]\grad^{v^{[\tau]}}_{ij} \dv - 4 K^{[s\tau]} \circ \varphi_{P,t} \dv=
-6 \Delta  \dv - 24 \dv + \varTheta^{\tau,s}(\dv),
\]
with $||\varTheta^{\tau,s}(\dv)||_Y\lesssim s\tau ||\dv||_X$.
Thus, for $s>0$ small,  we can solve $\dv$ from \eqref{pro1'} to obtain
\[
\dv = \Psi \left(s  \Pi\left[ \dxi \cdot \grad_{\xi}\left(K \circ \varphi_{P,t}\right) \right]\right)
=s \Upsilon (\dxi).
\]
Thus $\dxi \ne 0$ and we can normalize it so that $|\dxi|=1$. Using this in \eqref{pro2'}, we find
\[
s(\text{Id}-\Pi) \circ  \varTheta^{\tau,s} \circ \Upsilon (\dxi) - s \grad_{\xi} G(P,t) \dxi =0.
\]
Using $||\varTheta^{\tau,s}||\lesssim s\tau$, we find this impossible for $s>0$ small under our non-degeneracy
assumption on the zeros of $G(P,t)$.
\end{proof}

\vskip12pt
\section{Proof of \eqref{ptws'}, \eqref{grad4}, Theorem \ref{chy2} and Proposition 1}
\vskip10pt

\begin{proof}[Proof of \eqref{ptws'} and  \eqref{grad4}]
The full strength of \eqref{ptws} is established as soon as the $W^{2,3}$ estimates are established ---
the latter is a step in proving the $W^{2,6}$ estimates.

If there is a sequence of solutions $w_j$ to \eqref{2} such that $\max w_j = w_j (P_j) \to \infty$, then we choose
 conformal automorphism $\phi_j=\phi_{P_j, t_j}$ of  $\mathbb S^4$, such that the rescaled function
 \begin{equation} \label{resca}
 v_j (P) = w_j \circ \phi_j (P) + \ln | d \phi_j(P)|,
 \end{equation}
 satisfies the normalization condition
 \begin{equation} \label{norm}
 v_j (P_j) = \frac 14 \ln \frac {6}{K(P_j)}.
 \end{equation}
 If we use stereographic  coordinates for $\mathbb S^4$, with $P_j$ as the north pole, then
 \[
 y\left(\phi_j(P)\right)= t_j y(P), \mbox{ for } P \in \mathbb S^4,
 \]
 and
 \[
 v_j (P) = w_j \circ \phi_j (P)  + \ln \frac{t_j\left(1+|y(P)|^2\right)}{1+t_j^2|y(P)|^2}.
 \]
 $ v_j$ would satisfy
 \begin{equation} \label{norE}
 \sigma_2(A_{v_j}) = K \circ \phi_j e^{4v_j}.
 \end{equation}
 The normalization in \eqref{norm} amounts to choosing $t_j$ such that
 \[
 w_j(P_j)-\ln t_j = \frac 14 \ln \frac {6}{K(P_j)}.
 \]
 Thus, $t_j \to \infty$, and for any $P \in \mathbb S^4$,
 \begin{equation} \label{upper}
 v_j (P) \leq \frac 14 \ln \frac {6}{K(P_j)} +  \ln \frac{t_j^2\left(1+|y(P)|^2\right)}{1+t_j^2|y(P)|^2}.
 \end{equation}
 \eqref{upper} implies that, away from $-P_j$, $v_j$ has an upper bound independent of $j$.
 Together with \eqref{norE}, the local gradient and higher derivative estimates of \cite{GW1}, there exists
 a subsequence, still denoted as $\{v_j\}$, such that, $P_j \to P_*$, and for any $\delta >0$,
 \begin{equation} \label{ptw}
 v_j \to v_{\infty} \mbox{ in } C^{2,\alpha}\left( \mathbb S^4 \setminus B_{\delta}(-P_*)\right),
 \quad \text{ for some limit } \quad  v_{\infty}.
 \end{equation}
We also have
\begin{gather}
\sigma_2( A_{v_{\infty}} ) = K(P_*) e^{4 v_{\infty}} \qquad \text{ on } \quad \mathbb S^4 \setminus \{-P_*\},\\
\int_{ \mathbb S^4}  K(P_*) e^{4 v_{\infty}}\, \vo  \leq \liminf_{j \to \infty} \int_{ \mathbb S^4}
K \circ \phi_j e^{4v_j}\, \vo = 16 \pi^2, \\
v_{\infty}(P_*)= \frac 14 \ln \frac {6}{K(P_*)}, \quad \grad  v_{\infty}(P_*) =0, \label{north}\\
v_{\infty}(P) \le \frac 14 \ln \frac {6}{K(P_*)}+ \ln \frac{1+|y(P)|^2}{|y(P)|^2}.
\end{gather}

A Liouville type classification result in \cite{CGY02b} and \cite{LL03} says that 
$$v_{\infty}-\frac 14 \ln \frac {6}{K(P_*)}= \ln |d\phi|$$
 for some conformal automorphism $\phi$ of $\mathbb S^4$, which together with
\eqref{north} implies that
\begin{equation} \label{limit}
v_{\infty} \equiv \frac 14 \ln \frac {6}{K(P_*)}.
\end{equation}
Thus for any $\delta >0$,
\[
\lim_{j \to \infty} \int_{ \mathbb S^4 \setminus B_{\delta}(-P_*)} K \circ \phi_j e^{4v_j}\, \vo
= 6 \left| \mathbb S^4 \setminus B_{\delta}(-P_*) \right|.
\]
Together with the Gauss-Bonnet formula
\[
\int_{ \mathbb S^4}  K \circ \phi_j e^{4v_j}\, \vo = 6 \left| \mathbb S^4\right|,
\]
we have
\[
\lim_{j \to \infty} \int_{  B_{\delta}(-P_*)} K \circ \phi_j e^{4v_j}\, \vo = 6 \left|  B_{\delta}(-P_*) \right|.
\]
This allows us to apply our Theorem in \cite{Han04} on
$ B_{\delta}(-P_*)$ for small $\delta >0$ to conclude that $ \exists C >0$, such that
\begin{equation} \label{pupper}
\max_{\mathbb S^4} v_j \le C.
\end{equation}

Next we declare the
\begin{claim}
There exists $C'>0$ such that
\begin{equation} \label{lower}
\min_{\mathbb S^4}  v_j \ge - C'.
\end{equation}
\end{claim}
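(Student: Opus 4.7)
My plan is to combine two ingredients already implicit in the analysis so far: a uniform pointwise upper bound on $\Delta v_j$ coming from admissibility, together with uniform control of $v_j$ on $\partial B_\delta(-P_*)$ coming from the $C^{2,\alpha}$ convergence on $\mathbb S^4 \setminus B_\delta(-P_*)$ already established. These two pieces should combine via a classical minimum principle on the ball $B_\delta(-P_*)$ to rule out arbitrarily large negative values of $v_j$ inside.

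First, since each $v_j$ is admissible we have $\sigma_1(g_{v_j}^{-1}\circ A_{v_j})>0$, equivalently $\sigma_1(g_c^{-1}\circ A_{v_j})>0$, pointwise on $\mathbb S^4$. Taking the $g_c$-trace of the expression \eqref{trans} (with $w$ replaced by $v_j$) and using $\mathrm{tr}_{g_c} A_{g_c}=2$ on the round $\mathbb S^4$, one computes
\[
\sigma_1(g_c^{-1}\circ A_{v_j}) = 2-\Delta v_j - |\grad v_j|^2.
\]
Admissibility therefore yields $\Delta v_j < 2-|\grad v_j|^2 \le 2$ on $\mathbb S^4$, uniformly in $j$.

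Now fix a small $\delta>0$. By the $C^{2,\alpha}$ convergence $v_j \to v_\infty$ on $\mathbb S^4 \setminus B_{\delta/2}(-P_*)$ established above, there is a uniform bound $|v_j|\le M_\delta$ on $\partial B_\delta(-P_*)$ for all large $j$. On $B_\delta(-P_*)$ let $\eta_\delta$ be the fixed function solving $\Delta_{g_c}\eta_\delta=2$ with $\eta_\delta|_{\partial B_\delta(-P_*)}=0$; it is bounded on $B_\delta(-P_*)$ and depends only on $\delta$. Since $\Delta(v_j-\eta_\delta)\le 0$ on $B_\delta(-P_*)$, the minimum principle gives
\[
v_j(x)-\eta_\delta(x) \ge \min_{\partial B_\delta(-P_*)} v_j \ge -M_\delta, \qquad x \in B_\delta(-P_*),
\]
hence $v_j \ge -M_\delta -\|\eta_\delta\|_\infty$ on $B_\delta(-P_*)$. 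Combined with $v_j \ge v_\infty - o(1)$ outside this ball, this yields $\min_{\mathbb S^4} v_j \ge -C'$ as desired. The only real work is the algebraic verification that admissibility at the level of $\sigma_1$ delivers the pointwise inequality $\Delta v_j<2$; once this linear inequality is in hand the Claim follows from a standard maximum-principle comparison and does not require any further analysis of the fully nonlinear structure of \eqref{2}.
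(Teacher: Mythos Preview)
Your proof is correct, and it takes a genuinely different route from the paper's. Both arguments start from the same pointwise inequality coming from admissibility,
\[
2-\Delta v_j-|\grad v_j|^2\ge 0,
\]
but then diverge. The paper uses this inequality in two global ways: first, via the Green's function representation $v_j(P)-\bar v_j=\int(-\Delta v_j)G(P,\cdot)$ to get a uniform pointwise lower bound on $v_j-\bar v_j$; second, by integrating over $\mathbb S^4$ to bound $\int|\grad v_j|^2$ and hence, by Poincar\'e--Sobolev, $\|v_j-\bar v_j\|_{L^4}$. The mean $\bar v_j$ is then pinned down using the already established convergence on $\mathbb S^4\setminus B_\delta(-P_*)$. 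Your argument instead localizes everything to the single bad ball $B_\delta(-P_*)$: you read the inequality simply as $\Delta v_j\le 2$, build an explicit barrier $\eta_\delta$ with $\Delta\eta_\delta=2$ and zero boundary data, and invoke the minimum principle for the superharmonic function $v_j-\eta_\delta$, using the $C^{2,\alpha}$ convergence away from $-P_*$ only to supply boundary control on $\partial B_\delta(-P_*)$. Your approach is more elementary and more direct for the bare Claim; the paper's approach, as a byproduct, yields the global gradient bound $\int|\grad v_j|^2\le 2|\mathbb S^4|$, though this is not needed elsewhere since the stronger statement $\int|\grad v_j|^4\to 0$ is proved separately.
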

The Claim can be proved making use of the information that $R_{v_j} = R_{w_j} \circ \phi_j \ge 0$, 
which implies
\begin{equation} \label{scar}
2 - \Delta v_j - |\grad v_j |^2 \ge 0.
\end{equation}
Thus
\begin{equation} \label{ave}
\begin{split}
v_j(P)- \bar{v_j} &= \int_{\mathbb S^4} \left( - \Delta v_j (Q) \right) G(P,Q)\, \vo (Q) \\
	& \ge -2 \int_{\mathbb S^4}  G(P,Q)\, \vo (Q) ,
	\end{split}
	\end{equation}
	where $ G(P,Q)$ is the Green's function of $- \Delta$ on $\mathbb S^4$.
Integrating \eqref{scar} over $ \mathbb S^4$ implies that
\begin{equation} \label{gave}
2 \ge \mint | \grad v_j |^2 \ge \mbox{const. } \left( \mint | v_j(P)- \bar{v_j} |^4 \right)^{\frac 12} .
\end{equation}
\eqref{ptw}, \eqref{limit}, \eqref{ave}, and \eqref{gave} conclude the Claim and \eqref{ptws'}.

Next we prove the  integral estimate \eqref{grad4}.
This can be seen by looking at the integral version of the equation
\[
\int_{\mathbb S^4} \left[2|\grad v_j|^2 + 2 \Delta v_j -6 \right] \langle \grad v_j , \grad \eta \rangle
+ \Delta \eta |\grad v_j|^2 + \left( K \circ \phi_j e^{4 v_j} -6 \right) \eta =0.
\]
If we plug in $\eta = v_j$, we obtain
\[
\int_{\mathbb S^4} \left( 6 - 2 |\grad v_j|^2 - 3 \Delta v_j \right) |\grad v_j|^2 = 
\int_{\mathbb S^4} \left(  K \circ \phi_j e^{4 v_j} -6 \right) v_j.
\]
Using $\Delta v_j \le 2 - |\grad v_j|^2 $, we have
\begin{equation}\label{inegrad}
\int_{\mathbb S^4} |\grad v_j |^4 \le \int_{\mathbb S^4} \left(  K \circ \phi_j e^{4 v_j} -6 \right) v_j,
\end{equation}
which converges to $0$ by \eqref{ptw}, \eqref{limit}, \eqref{pupper}, \eqref{lower}
and the Dominated Convergence Theorem.
\end{proof}

Next, we prove {\bf Theorem \ref{chy2}}. We will first prove that, under our non-degeneracy conditions on $K$, there
is a bound $C>0$ depending on the quantities as in the statements of  {\bf Theorem  \ref{chy2}},
but uniform in $0<s\le 1$, such that any solution $w$ of \eqref{2} with $\Ks$ satisfies
$\max_{\mathbb S^4} w_j \le C$. Once we have  the  bound $\max_{\mathbb S^4} w_j\le C$, the $C^{2, \alpha}$
estimates follow from known theory of fully nonlinear elliptic equations.

\begin{proof}[Proof of  \emph{\bf Theorem \ref{chy2}}]
Suppose, on the contrary, that $\max_{\mathbb S^4} w_j \to \infty$ (for a sequence of $K$'s, which we 
write as a single $K$ for simplicity, satisfying the bounds in   {\bf Theorem \ref{chy2}}). 
Then, as proved above, \eqref{ptws'} holds.
Let $P_j, t_j$ be as defined in the earlier part of the proof.
We will then prove the following estimates:

\begin{equation} \label{1der}
|\grad K(P_j)| = \frac {o(1)}{t_j}, \quad \text{as } j \to \infty,
\end{equation}
and
\begin{equation} \label{2der}
\Delta K(P_j) \to 0, \quad \text{as } j \to \infty.
\end{equation}
\eqref{1der} and \eqref{2der} would contradict our hypotheses on $K$.

The main idea is to examine the Kazdan-Warner identity in the light of the asymptotic profile of $w_j$
as given in {\bf Theorem 1}.

For each $w_j$, we choose stereographic coordinates with $P_j$ as the north
pole. For $P=(x_1, \cdots, x_5) \in \mathbb S^4$, let its stereographic coordinates
be $y=(y_1, \cdots, y_4)$. Also set $x'=(x_1, \cdots, x_4)$. Then
\begin{equation}
\left\{
\begin{aligned}
x_i &= \frac {2y_i}{1+|y|^2}, \qquad i=1, 2, 3, 4, \\
x_5 &= \frac{|y|^2-1}{|y|^2+1}.
\end{aligned}
\right.
\end{equation}
For any $\epsilon >0$, there exists $M>0$ such that
for any $P$ with $|y(P)|>M$, we have

\begin{equation} \label{tay}
K(P) = K(P_j) + \sum_{i=1}^4 a_i x_i + \sum_{k,h=1}^4 b_{hk}x_h x_k + r(P),
\end{equation}
with
\begin{equation} \label{tail}
|r(P)| \le \ep |x'|^2, \quad |x'| |\grad r(P)| \le \ep  |x'|^2, \quad  |x'|^2 |\grad^2 r(Q)| \le \ep  |x'|^2.
\end{equation}
We can identify $a_i = \grad_i K(P_j)$, $b_{hk} = \grad_{hk}K(P_j)$, and we may assume that
$b_{hk}$ is diagonalized: $b_{hk} = \delta_{hk} b_h$.
Then, using
\[
\grad x_1 = (1-x_1^2, -x_1 x_2, \cdots, -x_1 x_5), \cdots, \grad x_5 =(- x_5 x_1, \cdots, -x_5 x_4, 1-x_5^2),
\]
and
\[
\grad x_i \cdot \grad x_h = \delta_{ih} - x_i x_h,
\]
we have, for $1\le h \le 4$,
\[
\langle \grad K, \grad x_h \rangle = a_h - \sum_{i=1}^4 a_i x_i x_h + 2b_h x_h
-2\sum_{i=1}^4 b_{i} x_i^2 x_h + \grad r \cdot \grad x_h.
\]
So we can fix $M$ large such that, when $|y|>M$, 

\begin{equation}
\left\{
\begin{aligned}
\langle \grad K, \grad x_h \rangle &= a_h + 2b_h x_h + r_1(P), \\
|r_1(P)| &\le \ep |x'|.
\end{aligned}
\right.
\end{equation}
From the Kazdan-Warner identity, we have

\[
\begin{split}
0&= \int_{\mathbb S^4} \langle \grad \Ks, \grad x_h \rangle e^{4w_j}\, \vo \\
&=s \int_{\mathbb S^4} \langle \grad K, \grad x_h \rangle e^{4w_j}\, \vo 
 \end{split}
 \]
Thus, the deformation parameter $s$ is divided out  from the Kazdan-Warner identity to give
\[
\begin{split}
0&= \int_{\mathbb S^4} \langle \grad K, \grad x_h \rangle e^{4w_j}\, \vo \\
 &= \int_{|y|\le M} \langle \grad K, \grad x_h \rangle e^{4w_j}\, \vo + \int_{|y| > M}
 \langle \grad K, \grad x_h \rangle e^{4w_j}\, \vo.
 \end{split}
 \]
\begin{remark}
It is this property that $K$, not $\Ks$, can be used in the Kazdan-Warner identity
that allows us to obtain bounds on $w$ uniform in $0<s\le 1$.
This also applies to the settings in  \cite{CGY93} and \cite{YL95} to make the
estimates there uniform in $0<s\le 1$ in the respective deformations.
\end{remark}
 We estimate
 \[
 \begin{split}
 \int_{|y|\le M} \langle \grad K, \grad x_h \rangle e^{4w_j}\, \vo
 &\le C  \int_{|y|\le M} e^{4w_j} \left(\frac{2}{1+|y|^2}\right)^4 d\,y \\
 &= C \int_{ |z| \le \frac{M}{t_j}} e^{4v_j} \left(\frac{2}{1+|z|^2}\right)^4 d\,z\\
 &\le C \left(  \frac{M}{t_j} \right)^4, \quad \text{using \eqref{pupper} and \eqref{lower}}.
 \end{split}
 \]
 \[ 
 \int_{|y| > M}  \langle \grad K, \grad x_h \rangle e^{4w_j}\, \vo
 = a_h \int_{|y| > M} e^{4w_j}\, \vo + 2 b_h \int_{|y| > M} e^{4w_j} x_h\, \vo
 + \int_{|y| > M} e^{4w_j} r_1(P)\, \vo.
 \]
 The following estimates will complete the proof of \eqref{1der}.
 \begin{gather}
 \lim_{j\to \infty} \int_{|y| > M} e^{4w_j}\, \vo = \frac{6}{K(P_*)} \left| \mathbb S^4 \right|.\label{e1} \\
\int_{|y| > M} e^{4w_j} x_h\, \vo = \frac {o(1)}{t_j}, \quad \text{as } j \to \infty \; (1\le h \le 4).\label{e2}\\
 \int_{|y| > M} e^{4w_j} r_1(P)\, \vo = \frac {o(1)}{t_j}, \quad \text{as } j \to \infty. \label{e3}
 \end{gather}

Here are the verifications of the above estimates.

\[
\int_{|y| > M} e^{4w_j}\, \vo = \int_{ |z| > \frac{M}{t_j}}  e^{4v_j} \left(\frac{2}{1+|z|^2}\right)^4 d\,z
\to \frac{6}{K(P_*)} \left| \mathbb S^4 \right|,
\]
by \eqref{ptw}, \eqref{pupper} and \eqref{lower}.
\[
\begin{split}
\int_{|y| > M} e^{4w_j} x_h\, \vo &= \int_{ |z| > \frac{M}{t_j}} \frac{2t_j z_h}{1+t_j^2|z|^2}
e^{4v_j} \left(\frac{2}{1+|z|^2}\right)^4 d\,z \\
&= \int_{ |z| > \frac{M}{t_j}} \frac{2t_j z_h}{1+t_j^2|z|^2}
\left(e^{4v_j} - \frac{6}{K(P_*)} \right) \left(\frac{2}{1+|z|^2}\right)^4 d\,z \\
&= \int_{ |z| > \delta} + \int_{ \delta > |z| > \frac{M}{t_j}},
\end{split}
\]
with
\[
\left| \int_{ \delta > |z| > \frac{M}{t_j}} \right| \le C \int_{ \delta > |z| > \frac{M}{t_j}}
	\frac{1}{t_j|z|} d\,z \le \frac{C\delta^3}{t_j}.
	\]
	For any given $\ep>0$, we can first fix $\delta >0$ such that $C\delta^3 < \ep$.
	Then using the convergence of $v_j$ to $\frac 14 \ln \frac{6}{K(P_*)}$ on $|z| > \delta$,
	we can fix $J$ such that when $j \ge J$, we have $\left| e^{4v_j}- \frac{6}{K(P_*)}\right| < \ep$.
	Then
	\[
	\left| \int_{ |z| > \delta} \right| \le \ep \int_{|z| > \delta} \frac{1}{t_j|z|} \left(\frac{2}{1+|z|^2}\right)^4 d\,z \le \frac{C\ep}{t_j}.
	\]
	These together prove the second estimate above. \eqref{e3} follows similarly.

Finally
\[
\langle \grad K, \grad x_5 \rangle= -\sum_{i=1}^4 a_i x_i x_5 - 2 \sum_{i=1}^4 b_i x_i^2 x_5 + 
\grad r \cdot \grad x_5.
\]
We may fix $M$ large so that $|\grad r \cdot \grad x_5 | \le \ep |x'|^3$ when $|y|>M$.
In
\begin{equation} \label{k5}
\begin{split}
0&= \int_{\mathbb S^4} \langle \grad K, \grad x_5 \rangle e^{4w_j}\, \vo \\
 &= \int_{|y|\le M} \langle \grad K, \grad x_5 \rangle e^{4w_j}\, \vo + \int_{|y| > M}
  \langle \grad K, \grad x_5 \rangle e^{4w_j}\, \vo,
   \end{split}
    \end{equation}
    \begin{equation} \label{ins}
    \left| \int_{|y|\le M} \langle \grad K, \grad x_5 \rangle e^{4w_j}\, \vo \right| \le C \left(
    \frac{M}{t_j}\right)^4,
    \end{equation}
    as before.
    \[
    \left| \int_{|y| > M} x_i x_5 e^{4w_j}\, \vo \right| = \frac{o(1)}{t_j}
    \]
    as in the proof of \eqref{e2}. Thus
    \begin{equation} \label{out1}
     \left| \int_{|y| > M} a_i x_i x_5 e^{4w_j}\, \vo \right|  = \frac{o(1)}{t_j^2}.
     \end{equation}

    \[
    \begin{split}
    &\int_{|y| > M} x_i^2 x_5 e^{4w_j}\, \vo \\
    = &\int_{|z|> \frac{M}{t_j}}\left( \frac{2t_j z_i}{1+t_j |z|^2}\right)^2 \frac{t_j |z|^2-1}{t_j |z|^2+1}
    e^{4v_j} \left( \frac{2}{1+|z|^2} \right)^4 d\,z \\
    = &\int_{|z|> \frac{M}{t_j}}\left( \frac{2t_j z_i}{1+t_j |z|^2}\right)^2 \frac{t_j |z|^2-1}{t_j |z|^2+1} 
      \left(e^{4v_j} - \frac{6}{K(P_*)} \right) \left( \frac{2}{1+|z|^2} \right)^4 d\,z \\
	&+ \frac{6}{K(P_*)}\int_{|z|> \frac{M}{t_j}} \left( \frac{2t_j z_i}{1+t_j |z|^2} \right)^2 \frac{t_j |z|^2-1}{t_j |z|^2+1}
	\left( \frac{2}{1+|z|^2} \right)^4 d\,z 
\end{split}
\]
Note that
\begin{equation} \label{out2}
\begin{split}
&\int_{|z|> \frac{M}{t_j}} \left( \frac{2t_j z_i}{1+t_j |z|^2} \right)^2 \frac{t_j|z|^2-1}{t_j |z|^2+1}
\left( \frac{2}{1+|z|^2} \right)^4 d\,z \\
\asymp &\frac{4}{t_j^2} \int_{|z|> \frac{M}{t_j}} \frac{z_i^2}{|z|^4} \left( \frac{2}{1+|z|^2} \right)^4 d\,z\\
\asymp &\frac{1}{t_j^2} \int_{|z|> \frac{M}{t_j}} \frac{1}{|z|^2} \left( \frac{2}{1+|z|^2} \right)^4 d\,z\\
\asymp &\frac{1}{t_j^2} \left( \int_0^{\infty} ( \frac{2}{1+r^2})^4 r d\,r \right) |\mathbb S^3|
\end{split}
\end{equation}
Similarly, we can prove
\begin{equation} \label{out2'}
\left| \int_{|z|> \frac{M}{t_j}}\left( \frac{2t_j z_i}{1+t_j |z|^2}\right)^2 \frac{t_j |z|^2-1}{t_j |z|^2+1}
      \left(e^{4v_j} - \frac{6}{K(P_*)} \right) \left( \frac{2}{1+|z|^2} \right)^4 d\,z \right|
      = \frac{o(1)}{t_j^2},
      \end{equation}
      and
\begin{equation} \label{out3}
\begin{split}
    &\int_{|y| > M} |x'|^3  e^{4w_j} \\
= & \int_{|z|> \frac{M}{t_j}} \left( \frac{2t_j |z|}{1+t_j^2 |z|^2} \right)^3 e^{4v_j}
    \left( \frac{2}{1+|z|^2} \right)^4 d\,z \\
    = &  \frac{ O(1) }{t_j^3}.
    \end{split}
\end{equation}
To put things together, we multiply \eqref{k5} by $t_j^2$ and use
     \eqref{ins}, \eqref{out1}, \eqref{out2}, and \eqref{out3} to see that
     \[
     0 = o(1) -2 \Delta K(P_j)  \left( |\mathbb S^3| \int_0^{\infty} ( \frac{2}{1+r^2})^4 r d\,r +o(1) \right) +
   \frac{o(1)}{t_j},
   \]
   which shows \eqref{2der}.

\end{proof}
\begin{proof}[Proof of \emph{\textbf{Proposition 1}}]
First, by {\bf Theorem \ref{chy2}}, there is a $C>0$ depending on $K$ and $0<\alpha <1$ such that any solution $w$ to 
\eqref{2} with $K$ substituted by $\Ks$ and $0<s\le 1$ satisfies
\begin{equation}\label{wbd}
||w||_{C^{2, \alpha}(\mathbb S^4)} < C.
\end{equation}
Since $v= w \circ \varphi_{P,t} + \ln |d  \varphi_{P,t}|$ is chosen such that
\[
\mint e^{4v(y)} y\, \vo =0,
\]
we obtain, in terms of $w$ and $(P,t)$,
\begin{equation}\label{ba}
0=\mint e^{4w(x)} \varphi_{P,t}^{-1}(x)\, \vo = \mint e^{4w(x)} \varphi_{P,t^{-1}} (x)\, \vo,
\end{equation}
Due to \eqref{wbd}, there is a $\delta >0$ such that 
\[
\mint e^{4w(x)} \ge \delta.
\]
If there existed a sequence of solutions $w_j$ for which $t_j \to\infty$, we would have, computing
in stereographic coordinates in which $P_j$ is placed at the north pole, $\varphi_{P_j,t_j^{-1}} (x) \to 
(0, \cdots, 0, -1)$ except at $x=P_j$, therefore, in view of \eqref{wbd}, 
\[
\mint e^{4w(x)} \varphi_{P,t^{-1}} (x)\,\vo \to (0, \cdots, 0, - \mint e^{4w(x)}) \ne 0,
\]
contradicting \eqref{ba} above.
This implies the existence of some $t_0$ such that $t\le t_0$. 
Using this and \eqref{wbd} in the relation between $w$ and $v$, we find an upper bound for
$||v||_{C^{2,\alpha}(\mathbb S^4)}$. Finally
using the equation for $v$:
\[
\sigma_2(A_v)= \Ks \circ  \varphi_{P,t} e^{4v},
\]
in which the right hand side has an upper bound in $C^{2,\alpha}(\mathbb S^4)$ due to
$C^{2,\alpha}(\mathbb S^4)$ estimates of $v$ and  the bound $t\le t_0$, we find higher derivative 
bounds for $v$. Then as $s\to 0$, a subsequence of $v$ would converge to a limit $v_{\infty}$
in $C^{2,\alpha}(\mathbb S^4)$, which satisfies
\[
\sigma_2(A_{v_{\infty}})= 6  e^{4 v_{\infty}} \quad \text{and} \quad
\mint  e^{4 v_{\infty}(x)} x\,\vo =0.
\]
This implies that $v_{\infty}\equiv 0$. Since this limit $v_{\infty}$ is unique, we obtain
that $v \to 0$ in $C^{2,\alpha}(\mathbb S^4)$ as $s \to 0$, which is the remaining part of \eqref{pro1bd}.
\end{proof}

\vskip12pt
\section{Proof of the $W^{2,6}$ estimates of \textbf{Theorem 1 and of Corollary 1}}
\vskip10pt

For the $W^{2,6}$ bound for $v_j$, we write $v$ for $v_j$ and $\sigma_2$ for
$\sigma_2(e^{-2v_j}g^{-1}_c \circ A_{v_j})=K\circ \varphi_j$, 
and adapt the argument for the $W^{2,p}$ estimates in 
\cite{CGY1} of Chang-Gursky-Yang and push the argument to $p =6$. We will first prove
a $W^{2,3}$ estimate for $v_j$, with the bound depending on an upper  bound of $\sigma_2= K\circ \varphi_j$,
a positive lower bound for $\sigma_2$, and an upper bound for $\int_{\mathbb S^4}|\nabla_0(K\circ \varphi_j)|^2 d vol_{g_c}$.
Then we will extend the $W^{2,3}$ estimate to $W^{2,6}$ estimate for the  $v_j$ in terms of
an upper  bound of $\sigma_2= K\circ \varphi_j$, 
a positive lower bound for $\sigma_2$, and an upper bound for $\int_{\mathbb S^4}|\nabla_0(K\circ \varphi_j)|^4 d vol_{g_c}$.
Since $\int_{\mathbb S^4}|\nabla_0(K\circ \varphi_j)|^4 d vol_{g_c}
= \int_{\mathbb S^4}|\nabla_0 K|^4d vol_{g_c}$, we see that a bound for the
$W^{2,3}$ norm of $v_j$ is given in terms of an upper bound of $K$, a positive lower bound for $K$, and
 an upper bound for $ \int_{\mathbb S^4}|\nabla_0 K|^4$. This will suffice for proving \eqref{ptws}.

\begin{proof}[Proof of the $W^{2,6}$ estimates of \emph{\textbf{Theorem 1}}]
First we list a few key ingredients for these $W^{2,p}$ estimates, mostly adapted from \cite{CGY1}. 
As in \cite{CGY1} we explore two 
differential identities, which in the case of  $\mathbb S^4$, are
\begin{equation} \label{c2}
\begin{split}
S_{ij} \grad^{2}_{ij} R &= 6\, tr E^3 + R|E|^2 + 3 \Delta \sigma_2 + 3(|\grad E|^2 - \frac{|\grad R|^2}{12})\\
&\geq  6\, tr E^3 + \frac{R^3}{12} - 2\sigma_2 R  + 3 \Delta \sigma_2 - \frac{3 |\grad \sigma_2|^2}{2\sigma_2},
\end{split}
\end{equation}
following (5.10) of \cite{CGY1}, with
\[
S_{ij}=\frac{\partial \sigma_2 (A)}{\partial A_{ij}} = - R_{ij} + \frac 12 R g,
\]
and
\begin{equation} \label{V}
\begin{split}
&S_{ij} \grad^{2}_{ij}|\grad v|^2 \\
=&  \frac{R^3}{144} -\frac{tr E^3}{2} - \frac{\sigma_2 R}{12}
 -\frac{R |\grad v|^4}{2} - 2 S_{ij}\grad_i |\grad v|^2 \grad_j v 
 + S_{ij} \grad_l A^{\circ}_{ij} \grad_l v\\
& - 2e^{-2v}S_{ij}\grad_iv \grad_j v + 2Re^{-2v}|\grad v|^2+ \frac{Re^{-4v}}{2}
 -\langle \grad v, \grad \sigma_2 \rangle -2 \sigma_2 e^{-2v},
\end{split}
\end{equation}
following (5.44) of \cite{CGY1} and the fact that $A^0_{ij}=g^0_{ij}$ in the case of $\mathbb S^4$.
Here the differentiations are in the metric $g$.

In \eqref{c2} and \eqref{V} we used $|E|^2 = \frac{R^2}{12} - 2 \sigma_2$ and 
\begin{equation}\label{conca}
|\grad E|^2 - \frac{|\grad R|^2}{12} \geq - \frac{|\grad \sigma_2|^2}{2\sigma_2}. 
\end{equation}
\eqref{conca} can be proven as in (7.26) of \cite{CGY1}, but can also be seen to be based on the general fact that
$\{\sigma_k\}^{1/k}$ is concave in its argument as follows: set $F(A_{ij})= \{\sigma_k (A_{ij})\}^{1/k}$,
then 
\begin{equation}\label{sa}
S_{ij} = \frac{\partial \sigma_k}{\partial A_{ij}} = k F^{k-1} \frac{\partial F}{\partial A_{ij}}, 
\quad \text{and} \quad \grad \sigma_k = S_{ij} \grad A_{ij} =  k F^{k-1} \frac{\partial F}{\partial A_{ij}} \grad A_{ij}.
\end{equation}
So
\[
\grad_l S_{ij} = kF^{k-1}  \frac{\partial^2 F}{\partial A_{ij} \partial A_{IJ}} \grad_l A_{IJ} +
	k(k-1)F^{k-2} \frac{\partial F}{\partial A_{ij}}  \frac{\partial F}{\partial A_{IJ}}  \grad_l A_{IJ}.
\]
Thus
\begin{equation}\label{concap}
\begin{split}
& \sum_l \grad_l S_{ij} \grad_l A_{ij} \\
= & kF^{k-1}  \frac{\partial^2 F}{\partial A_{ij} \partial A_{IJ}} \grad_l A_{IJ}
\grad_l A_{ij} +  k(k-1)F^{k-2} \frac{\partial F}{\partial A_{ij}}  \frac{\partial F}{\partial A_{IJ}}  \grad_l A_{IJ}
\grad_l A_{ij} \\
\le & \frac{(k-1) |\grad \sigma_k|^2}{k \sigma_k} \qquad \qquad \qquad \qquad \text{using concavity of $F$ and \eqref{sa}.}
\end{split}
\end{equation}
In the case of $2k=n=4$, $A_{ij}= E_{ij} + \frac{R}{12} g_{ij}$, and $S_{ij}=\frac{R}{4}g_{ij} -E_{ij}$. So
\[
\sum_l \grad_l S_{ij} \grad_l A_{ij} = \sum_l \{ \frac{\grad_l R}{4}g_{ij} - \grad_l E_{ij}\}\{
\grad_l E_{ij} + \frac{\grad_l R}{12} g_{ij} \} = \frac{|\grad R|^2}{12} -|\grad E|^2,
\]
and by \eqref{concap}
\[
 \frac{|\grad R|^2}{12} -|\grad E|^2 \le  \frac{ |\grad \sigma_2|^2}{2  \sigma_2}.
\]
Because of $S_{ij, j}=0$, which is a consequence of Bianchi identity, 
we can use \eqref{c2} and \eqref{V} to obtain
\[
\begin{split}
0=& \int_{\mathbb S^4} S_{ij} \grad^2_{ij} (R+12|\grad v|^2) \\
&\geq \int_{\mathbb S^4} \frac{R^3}{6} - 6R|\grad v|^4 - 24 S_{ij}\grad_i |\grad v|^2 \grad_j v \\
& + 12 S_{ij}  \grad_l A^{\circ}_{ij} \grad_l v + 24Re^{-2v}|\grad v|^2 -24e^{-2v}S_{ij}\grad_iv \grad_j v\\
& -12\langle \grad v, \grad \sigma_2 \rangle +(6e^{-4v} - 2 \sigma_2) R - 24e^{-2v} \sigma_2 - \frac{3 |\grad \sigma_2|^2}{2\sigma_2} ,
\end{split}
\]   
from which we can estimate $\int_{\mathbb S^4} R^3$ in terms of the other terms:
\begin{equation} \label{r3}
\begin{split}
\int_{\mathbb S^4}\frac{R^3}{6} \le & \int_{\mathbb S^4}  6R|\grad v|^4 + 24 S_{ij}\grad_i |\grad v|^2 \grad_j v \\
& - 12 S_{ij}  \grad_l A^{\circ}_{ij} \grad_l v - 24Re^{-2v}|\grad v|^2 + 24e^{-2v}S_{ij}\grad_iv \grad_j v\\
	& + 12\langle \grad v, \grad \sigma_2 \rangle - (6e^{-4v} - 2 \sigma_2) R  + 24 \sigma_2e^{-2v} +
		\frac{3 |\grad \sigma_2|^2}{2\sigma_2}.
\end{split}
\end{equation}
The integrations are done in the $g$ metric, but due to the $L^{\infty}$ estimates on $v$, the integrals in
$g$ metric are comparable to those in $g_c$. 
 The terms that require careful treatments are
\[
\int_{\mathbb S^4}  S_{ij}\grad_i |\grad v|^2 \grad_j v
\]
and
\begin{equation}\label{rw}
\int_{\mathbb S^4} R|\grad v|^4 \le \left[ \int_{\mathbb S^4} R^3 \right]^{1/3}
\left[ \int_{\mathbb S^4} |\grad v|^6 \right]^{2/3}  \le  \frac{\epsilon}{3} \int_{\mathbb S^4} R^3 
+ \frac{2 \epsilon^{- 1/2}}{3} \int_{\mathbb S^4} |\grad v|^6.
\end{equation}
The term $\int_{\mathbb S^4}  S_{ij}\grad_i |\grad v|^2 \grad_j v$ can be estimated as (5.53) in \cite{CGY1}
\begin{equation} \label{s_ij}
\begin{split}
& \int_{\mathbb S^4}  S_{ij}\grad_i |\grad v|^2 \grad_j v \\
= & - \int_{\mathbb S^4}  |\grad v|^2 S_{ij} \grad^2_{ij} v  \\
=& - \int_{\mathbb S^4}  |\grad v|^2 S_{ij} \{ - \frac{A_{ij}}{2} + \frac{A^0_{ij}}{2} - \grad_i v \grad_j v + \frac{|\grad v|^2}{2}g_{ij} \} \\
=& \int_{\mathbb S^4} |\grad v|^2 \{ \sigma_2 + S_{ij} \grad_i v \grad_j v - \frac{R |\grad v|^2}{2} - 
\frac{S_{ij} A^0_{ij} }{2} \} \\
=& \int_{\mathbb S^4} |\grad v|^2 \{ \sigma_2 - R_{ij} \grad_i v \grad_j v -\frac{S_{ij} A^0_{ij} }{2} \} \\
\le & \int_{\mathbb S^4} |\grad v|^2  \sigma_2 .
\end{split}
\end{equation}
where in the last  line we used  $(R_{ij}) \ge 0$
when $g \in \Gamma_2^+$ in dimension $4$ and $S_{ij} A^0_{ij} \ge 0$ on $\mathbb S^4$.
The terms in the second line of \eqref{r3} can be estimated in terms of $\int_{\mathbb S^4}  Re^{-2v}|\grad v|^2 $,
which in turn can be estimated as
\begin{equation} \label{rw2}
\int_{\mathbb S^4} R e^{-2v} |\grad v|^2  \lesssim \{ \int_{\mathbb S^4}  R^3 \}^{1/3} \{ \int_{\mathbb S^4}  |\grad v|^3 \}^{2/3}
\le \frac{\epsilon}{3} \int_{\mathbb S^4}  R^3  + \frac{2\epsilon^{- 1/2}}{3} \int_{\mathbb S^4} |\grad v|^3.
\end{equation}
The terms in the last line of \eqref{r3} can be estimated in terms of upper bound of
$\sigma_2$, a lower bound of $\sigma_2$, and $\int_{\mathbb S^4} |\nabla \sigma_2|^2$, in a trivial way.
The term $\int_{\mathbb S^4}  |\grad v|^6$ in \eqref{rw} can be estimated as 
\begin{equation}\label{w6}
\int_{\mathbb S^4}  |\grad v|^6 \le \left[ \int_{\mathbb S^4}  |\grad v|^4 \right]^{3/4}
 \left[ \int_{\mathbb S^4}  |\grad v|^{12}\right]^{1/4},
\end{equation}
and, as in (5.73) in \cite{CGY1},
\begin{equation} \label{w12}
\begin{split}
& \left[ \int_{\mathbb S^4}  |\grad v|^{12}\right]^{1/4} \\
\lesssim &\int_{\mathbb S^4} |\grad^2 v|^3 + |\grad v|^6 + e^{-3v} |\grad v|^3 \\
\lesssim &   \int_{\mathbb S^4} R^3 + |\grad v|^6 +1,
\end{split}
\end{equation}
here in the last line we used
\begin{gather}
S_{ij} = S_{ij}^0 + 2 \grad^2_{ij} v - 2 (\Delta v)g_{ij} + 2 \grad_i v \grad_jv + |\grad v|^2 g_{ij},
\\
R = R_0 e^{-2v} - 6 \Delta v + 6 |\grad v|^2, \label{str}\\
\end{gather}
and
\[
0 \le (S_{ij}) \le  (Rg_{ij}).
\]
Using \eqref{w12} in \eqref{w6} and  noting that $\int_{\mathbb S^4}  |\grad v|^{4} $ is small, we obtain
\begin{equation}\label{6R}
 \int_{\mathbb S^4}  |\grad v|^{6} \lesssim \left[ \int_{\mathbb S^4}  |\grad v|^4 \right]^{3/4}
 \int_{\mathbb S^4} R^3  +1,
\end{equation}
Using \eqref{6R}, together with 
\eqref{s_ij}, \eqref{rw2} and \eqref{rw} in \eqref{r3}, and noting the smallness of
$\int_{\mathbb S^4}  |\grad v|^{4} $,  we obtain
an upper bound for $\int R^3$ in terms of $\int |\grad \sigma_2|^2$, upper bound for $\sigma_2$
and positive lower bound for $\sigma_2$. Note that 
$\int_{\mathbb S^4} |\grad \sigma_2|^4=  \int_{\mathbb S^4} |\grad_0(K\circ \phi_j)|^4
=\int_{\mathbb S^4} |\grad_0 K|^4\,\vo$
and using a transformation law like \eqref{6R}, we  can estimate
\[
\int |\Delta_0 v|^3\, \vo \lesssim \int \left(R^3 +|\grad_0 v|^6\right)\, \vo \lesssim \int R^3 +1,
\]
bounded above in terms of $ \int |\grad_0 K|^4 \, \vo$, upper bound for $K$ and positive lower bound for $K$.
Then we can use the $W^{2,p}$ theory for the Laplace operator to obtain the full $W^{2,3}$ estimates for $v$.

\begin{remark}\label{rem:w23}
In fact, for any solution $w$ to \eqref{2}, one can obtain an upper bound for the
$W^{2,3}$ norm of  $w$ in terms of a positive upper and lower bound for $K$, an upper bound for
 $ \int |\grad_0 K|^2 \, \vo$, and an upper bound for $|w|$ and $ \int |\grad_0 w|^4 \, \vo$.
A proof would proceed as above, instead of using the smallness of $ \int |\grad_0 w|^4 \, \vo$
in proving \eqref{6R} and the subsequent bound on $\int R^3$ via \eqref{r3}, one uses
Proposition 5.20, Proposition 5.22, and Lemma 5.24 in
\cite{CGY1} to complete the argument.
\end{remark}

To obtain the $W^{2,6}$ estimates of $v$  by iteration, we multiply \eqref{c2} and \eqref{V} by
$R^p$ and estimate $\int R^{p+3}$ in terms of the other terms:
\[
\int R^p S_{ij} \grad^2_{ij} R \ge 
\int \frac{R^{p+3}}{12} + 6 R^p \, tr E^3 - 2\sigma_2 R^{p+1}  + 3 R^p  \Delta \sigma_2 - \frac{3 R^p  |\grad \sigma_2|^2}{2\sigma_2},
\]
and
\[
\begin{split}
&\int R^p S_{ij} \grad^{2}_{ij}|\grad v|^2 \\
=& \int  \frac{R^{p+3}}{144} -\frac{R^p tr E^3}{2} - \frac{\sigma_2 R^{p+1}}{12}
 -\frac{R^{p+1} |\grad v|^4}{2} -
2 R^p S_{ij}\grad_i |\grad v|^2 \grad_j v \\
& + R^p S_{ij} \grad_l A^{\circ}_{ij} \grad_l v- 2R^p S_{ij}\grad_iv \grad_j v + 2R^{p+1}|\grad v|^2 -
R^p \langle \grad v, \grad \sigma_2 \rangle -2 \sigma_2 R^p + \frac{R^{p+1}}{2}.
\end{split}
\]
From these we obtain
\begin{equation}\label{p+3}
\begin{split}
 & \int \frac{R^{p+3}}{6} \\
\le & \int  R^p S_{ij} \grad^2_{ij} \{ R+ 12 |\grad v|^2 \} - 3 R^p  \Delta \sigma_2 + 6 R^{p+1} |\grad v|^4
+ 24 R^p S_{ij}\grad_i |\grad v|^2 \grad_j v\\
    & + 3 \sigma_2 R^{p+1} + \frac{3 R^p  |\grad \sigma_2|^2}{2\sigma_2}
- 12 R^p S_{ij} \grad_l A^{\circ}_{ij} \grad_l v + 24 R^p S_{ij}\grad_iv \grad_j v - 24 R^{p+1}|\grad v|^2 \\
& + 12 R^p \langle \grad v, \grad \sigma_2 \rangle  + 24 \sigma_2 R^p - 6 R^{p+1}.
\end{split}
\end{equation}

The most crucial terms are
\begin{equation} \label{ps_ijR}
\int R^p S_{ij} \grad^2_{ij} R = -p \int R^{p-1} S_{ij}\grad_i R \grad_j R,
\end{equation}
\begin{equation} \label{ps_ijw}
\begin{split}
&\int R^p S_{ij} \grad^2_{ij} |\grad v|^2 \\
= & - p \int R^{p-1} S_{ij}\grad_i R \grad_j |\grad v|^2 \\
 \le & p \int  R^{p-1} \left[ S_{ij}\grad_i R \grad_j R \right]^{1/2}
 \left[ S_{ij}\grad_i |\grad v|^2 \grad_j |\grad v|^2 \right]^{1/2} \\
\le & p \left[  \int  R^{p-1}  S_{ij}\grad_i R \grad_j R \right]^{1/2}
\left[ \int  R^{p-1}  S_{ij} \grad_i |\grad v|^2 \grad_j |\grad v|^2 \right]^{1/2} \\
\le & \frac{p}{2} \int  R^{p-1}  S_{ij}\grad_i R \grad_j R + 2p
 \int  R^p |\grad^2 v|^2 |\grad v|^2 \\
\le & \frac{p}{2} \int  R^{p-1}  S_{ij}\grad_i R \grad_j R + C \, p 
 \int  R^p (R^2 + |S^0_{ij}|^2 + |\grad v|^2)|\grad v|^2.
\end{split}
\end{equation}
and
\begin{equation}\label{rlap}
\begin{split}
& \int - R^p\Delta \sigma_2  \\
= & p \int R^{p-1} \grad \sigma_2 \grad R \\
\le & p \left[\int |\grad \sigma_2|^4 \right]^{1/4} \left[ \int |\grad R|^2 R^{p-2} \right]^{1/2}
\left[ \int R^{2p} \right]^{1/4} \\
\le &  p \left[\int |\grad \sigma_2|^4 \right]^{1/4} \left[ \int \frac{ R^{p-1}  
S_{ij}\grad_i R \grad_j R}{3 \sigma_2}  \right]^{1/2} \left[ \int R^{2p} \right]^{1/4}\\
\le & \frac{\epsilon p}{2}\int \frac{ R^{p-1}
S_{ij}\grad_i R \grad_j R}{3 \sigma_2} + \frac{p}{2\epsilon} \left[\int |\grad \sigma_2|^4 \right]^{1/2}
\left[ \int R^{2p} \right]^{1/2}
\end{split}
\end{equation}
Next we claim that the Sobolev inequality in dimension $4$ implies 
\begin{equation}\label{sob}
\left[ \int R^{2p}\right]^{1/2} \lesssim p^2 \int  |\grad R|^2 R^{p-2} + \int R^p |\grad v|^2 +\int R^p e^{-2v}.
\end{equation}

Using \eqref{sob} in \eqref{rlap}, we obtain
\begin{equation}\label{rlap2}
\begin{split}
& \int - R^p\Delta \sigma_2  \\
\le & \epsilon p \int \frac{ R^{p-1}
S_{ij}\grad_i R \grad_j R}{3 \sigma_2}  + \frac{\epsilon}{2p} \int  R^p |\grad v|^2 + 
\frac{p^3}{8\epsilon^3} \int |\grad \sigma_2|^4.
\end{split}
\end{equation}

Using \eqref{ps_ijR}, \eqref{ps_ijw}, and \eqref{rlap2} in \eqref{p+3} and choosing $\epsilon >0$
small, we obtain
\begin{equation} \label{p+3rev}
\begin{split}
& \int \frac{R^{p+3}}{6} + \frac{p}{4} R^{p-1}
S_{ij}\grad_i R \grad_j R \\
\lesssim & \int R^{p+1} |\grad v|^4 + R^{p+2}|\grad v|^2 +R^{p}|\grad \sigma_2|^2
 + R^p |\grad v||\grad \sigma_2| + R^{p+1}+ 1  \\
\lesssim & \{ \int R^{p+3} \}^{\frac{p+1}{p+3}}\{\int |\grad v|^{2(p+3)} \}^{\frac{2}{p+3}}
+ \{ \int R^{p+3} \}^{\frac{p+2}{p+3}}\{ \int |\grad v|^{2(p+3)} \}^{\frac{1}{p+3}}  \\
&+ \{ \int R^{2p} \}^{1/2} \{ \int |\grad \sigma_2|^4\}^{1/2}+ \{ \int R^{2p} \}^{1/2} 
\{ \int |\grad v|^4\}^{1/4} \{\int |\grad \sigma_2|^4\}^{1/4} + \int R^{p+1}+ 1
\end{split}
\end{equation}
Now  for $p\le 3$, we have $2p\le p+3$ and $2(p+3)\le 12$. Using the earlier bounds
on $\int R^3$ and $\int |\grad v|^{12}$ from \eqref{w12}, we obtain an upper bound for
$\int R^6$ in terms of $\int |\grad_0 K|^4 \, \vo$,
an upper bound and a positive lower bound of $K$, which again gives a bound for $v$ in $W^{2,6}$.
\end{proof}

\begin{proof}[Proof of  Corollary 1] 
Let $\delta>0$ be small such that the argument for \eqref{6R} and the subsequent $W^{2,3}$ estimate for $v$ via
\eqref{r3} would go through when $\int_{\mathbb S^4}|\nabla v|^4 \le \delta$.
For any admissible solution $w$ to \eqref{2}, Theorem~\ref{chy} implies that there is a constant $B>0$
depending on the $C^2$ norm of $K$, a positive lower  bound of $K$, and $\delta>0$,  such that if $\max w = w(Q) >B$,
then the normalized $v$ defined  as in Theorem~\ref{chy}:
$v=w\circ \varphi + \ln |d\varphi|$ with $ v(Q) =\frac 14 \ln \frac{6}{K(Q)}$, would satisfy
\begin{equation}\label{ene1}
\left| v - \frac 14 \ln \frac{6}{K(Q)}\right| \le \delta, \quad
\text{and}   \quad \int_{\mathbb S^4} |\grad v|^4 \le \delta.
\end{equation}
$v$ also satisfies \eqref{transf} and  then estimate
\eqref{r3}, with $\sigma_2$ standing for $K\circ \varphi$, is valid for $v$.
Then the $W^{2,3}$ estimate in Theorem~\ref{chy} would be valid for $v$, and one obtains
a bound for the $W^{2,3}$ norm of $v$ in terms of an upper bound for $K$, a positive lower bound for $K$,
and an upper bound for $\int_{\mathbb S^4} |\grad_0 K \circ \varphi|^4 \, \vo$, and
since $\int_{\mathbb S^4} |\grad_0 K \circ \varphi|^4 \, \vo= \int_{\mathbb S^4} |\grad_0 K |^4 \, \vo$,
one can use this estimate to obtain the bound for $F[v]$.
Since $I\hskip -0.5mm I[w]= I\hskip -0.5mm I[v]$ and $Y[w]= Y[v]$, the  bound for $F[w]$ now follows.
When the solution $w$ satisfies $w\le B$, then
one can use the Harnack type estimate in \cite{Han04} to obtain a lower bound for $w$, and use
inequality \eqref{inegrad} to obtain an upper bound for $\int_{\mathbb S^4} |\grad w|^4\, \vo$. Then
one can use Remark~\ref{rem:w23} to obtain  the $W^{2,3}$ estimate for $w$ in terms of an upper bound for $K$,
a positive lower bound for $K$, and an upper bound for $\int_{\mathbb S^4} |\grad_0 K|^2 \, \vo$.
Finally  these  $W^{2,3}$ estimates for $w$ give   directly the bound for $F[w]$.
\end{proof}

\end{document}